\theoremstyle{definition}
\theoremstyle{definition}
\newtheorem{rmk}{Remark}
\theoremstyle{definition}
\theoremstyle{definition}\usepackage{amsmath}
\theoremstyle{plain}
\newtheorem{thm}{Theorem}
\theoremstyle{plain}
\theoremstyle{plain}
\newtheorem{conj}{Conjecture}
\theoremstyle{plain}
\newtheorem{lem}{Lemma}
\newtheorem{coro}{Corollary}
\newenvironment{aprf}{\noindent\textbf{(Almost) proof:\\}}{{ $\square$ \par}}
\newcommand\blfootnote[1]{%
  \begingroup
  \renewcommand\thefootnote{}\footnote{#1}%
  \addtocounter{footnote}{-1}%
  \endgroup
}
\newcommand{\Q}{\mathbb{Q}}
\newcommand{\R}{\mathbb{R}}
\newcommand{\C}{\mathbb{C}}
\newcommand{\Ok}{\mathcal{O}_K}
\newcommand{\epsi}{\varepsilon}
\newcommand{\Nm}{\operatorname{N}}
\newcommand{\diff}{\mathcal{D}_{K}}
\newcommand{\lp}{\left(}
\newcommand{\rp}{\right)}
\newcommand{\N}{\mathbb{N}}
\newcommand{\Fr}{\mathcal{F}_{r_1,r_2}}
\begin{document}

\address{Dipartimento di Matematica\\
         Università di Milano\\
         via Saldini 50\\
         20133 Milano\\
         Italy}
\title[Conjectural improvements for regulators]{A conjectural improvement for inequalities related to regulators of number fields}
\author[F.~Battistoni]{Francesco Battistoni}

\email{francesco.battistoni@unimi.it}

\keywords{Regulators, number fields, upper bounds}

\subjclass[2010]{11R27, 11R29}

\blfootnote{The author states that there is no conflict of interest.}

\maketitle

\begin{abstract}
An inequality proved firstly by Remak and then generalized by Friedman shows that
there are only finitely many number fields with a fixed signature and whose regulator is less
than a prescribed bound. Using this inequality, Astudillo, Diaz y Diaz, Friedman and
Ramirez-Raposo succeeded to detect all fields with small regulators having degree less or equal
than 7.

In this paper we show that a certain upper bound for a suitable polynomial,
if true, can improve Remak-Friedman's inequality and allows a classification
for some signatures in degree 8 and better results in degree 5 and 7.
The validity of the conjectured upper bound is extensively discussed.
\end{abstract}

\tableofcontents

\section{Introduction}
Let $K$ be a number field of degree $n$. Let $d_K$ be the discriminant of $K$ and $R_K$ be the regulator. Landau \cite{remak} proved that there exists $A_1>0$ such that
$$R_K < A_1\frac{\sqrt{|d_K|}}{(\log|d_K|)^{n-1}}$$
while Remak \cite{remak} showed that there exists $A_2>0$ depending on the signature of $K$ such that 
$$|d_K| < n^n\exp(A_2 R_K).$$
In particular, Remak's inequality shows that an upper bound for $R_K$ implies an upper bound for $d_K$, and this gives a finite number of fields $K$ satisfying this bound by Hermite's theorem.\\
%These results imply that, given $R_0>0$, there are only finitely many fields $K$ such that $R_K\leq R_0$: it is then natural to ask whether there is a way to classify number fields with fixed signature and bounded regulator.\\
Let $K$ be of degree $n$, admitting $r_1$ embeddings in $\R$ and $r_2$ embeddings up to complex conjugation in $\C$ which are not embeddings in $\R$: the pair $(r_1,r_2)$ is called signature of $K$ and satisfies the relation $n=r_1+2r_2$. Friedman \cite{friedmanAnalyticRegulator, friedmanTotalPositivity} generalized Remak's inequality to every number field extension $K/F$, showing that there exists $A_3>0$ depending on the signatures of $K$ and $F$ such that
$$ |d_K| < |d_F|^{[K:F]} [K:F]^{[K:\Q]}\exp(A_3 R_K).$$
This expression reduces to the previous one assuming $F=\Q$, and it will be referred to as ``Remak-Friedman's inequality".
 Starting from this result, a classification procedure was set in 2016 by Astudillo, Diaz y Diaz and Friedman \cite{regulators} which allowed to detect all number fields with regulator less than some chosen upper bound (depending on the signature) in the following cases.
\begin{itemize}
    \item For degrees $n\leq 6$.
    \item For every signature in the degree $n=7$, except $(5,1)$.
    \item For signatures $(0,4)$ and $(8,0)$ in degree $n=8$ and for the signature $(9,0)$ in degree $n=9$.
\end{itemize}
The missing signature $(5,1)$ in degree 7 was discussed later by Friedman and Ramirez-Raposo \cite{RamirezRaposo} with an ad hoc improvement to Remak-Friedman's inequality which allowed to implement the procedure.

The next cases in which a classification of this kind is not known yet are the remaining
signatures (2,3), (4,2) and (6,1) in degree 8. One of the reasons why this study was skipped by the previous authors was the lack of complete tables of number fields up to some discriminant bounds, which prove to be crucial to guarantee the correctness of the procedure. We were able to provide such lists for these signatures \cite{battistoniMinimum, battistoni2019Arxiv}, and we tried to apply the classification method on the considered signatures; the attempt however was not successful, and the reasons are similar to the ones which prevented the previous authors to immediately solve the case of signature (5,1).

It is then natural to try to overcome this difficulty by looking at what Friedman and Ramirez-Raposo did for signature (5,1), and their work suggests that a possible solution could derive from improving somehow the upper bounds of Remak-Friedman's inequality.

%The main goal of this paper consists then in the 
This paper is mainly devoted to the study of a specific term of Remak-Friedman's inequality, corresponding to a multivariate polynomial $P$ defined over a subregion of the hypercube $[-1,1]^{n}$ which depends on the signature $(r_1,r_2)$: more in detail, we would like to provide the correct optimization of $P$ and detect its maximum value, in order to apply it for the classification of number fields with small regulators. The setting of the problem takes inspiration from Pohst \cite{pohstRegulator}, who proved that this maximum is indeed much lower than the usual estimate for $P$ whenever all the variables of the polynomial are real and the degree of the involved fields is at most 11 (which is the reason why signatures (8,0) and (9,0) were solved). At the same time, it is easy to show that, if the variables are all complex, the previously known maximum for $P$ is sharp.

We would like then to obtain similar results for intermediate signatures. Apart from a few exceptions, we were not able to analytically provide the correct values of these maximums: however, we made numerical experiments, via programs written for the computer algebra system PARI/GP \cite{pari} and using the MATLAB Optimization Toolbox \cite{matlabOptimization}, and from the heuristics we conjectured the values of the desired maximums and an iterative behaviour of these values.

Finally, assuming the truth of these conjectures as given in Table \ref{TableMaximum} of Section \ref{section5}, we prove the following results (the involved quantities $M(n,r_2)$ are defined in \eqref{definitionM} of Section \ref{subsection4.1}). 

\begin{thm}\label{ThmDeg8}
Suppose the value of $M(8,1)$ given in Table \ref{TableMaximum} is correct. Then there exist exactly 4 number fields $K$ of signature $(6,1)$ with regulator $R_K\leq 7.48$, and they are the 4 fields with this signature and $|d_K|=65106259, 
68494627, 
68856875, 
69367411$, having $R_K=7.13506\ldots,7.38088\ldots,7.41473\ldots,7.4303\ldots$ respectively.
\end{thm}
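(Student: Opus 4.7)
The plan is to mimic the classification procedure of Astudillo, Diaz y Diaz and Friedman \cite{regulators}, adapting it with the improved Remak-Friedman inequality that is available under the assumption on $M(8,1)$. Concretely, I would first translate the hypothesis $R_K\leq 7.48$ into an explicit discriminant bound. The Remak-Friedman inequality, once the factor controlled by the polynomial $P$ is replaced by the conjectural optimal value $M(8,1)$, takes the shape
\[
|d_K|\ \leq\ C(r_1,r_2,M(8,1))\cdot\exp\!\bigl(A(r_1,r_2)\,R_K\bigr)
\]
for explicit constants depending only on the signature and on $M(8,1)$. Plugging in $(r_1,r_2)=(6,1)$ and $R_K=7.48$ produces a concrete bound $|d_K|\leq D_0$.

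Next I would appeal to the tables of totally complex-one number fields of signature $(6,1)$ constructed in \cite{battistoniMinimum, battistoni2019Arxiv}: by construction these tables list every field of this signature with $|d_K|$ below a threshold at least as large as $D_0$, so that completeness of the enumeration in the range $|d_K|\leq D_0$ is guaranteed. For each candidate field in the list I would compute $R_K$ using PARI/GP \cite{pari}, retain those with $R_K\leq 7.48$, and check that they coincide with the four fields of discriminants $65106259$, $68494627$, $68856875$, $69367411$ whose regulators are stated in the theorem.

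The main obstacle is a quantitative one: the discriminant bound $D_0$ produced by the improved inequality must actually fall below the upper end of the tables in \cite{battistoniMinimum, battistoni2019Arxiv}, otherwise no finite enumeration can certify the claim. This is precisely the reason the procedure had failed without the conjectural $M(8,1)$: with the previously known constant the resulting $D_0$ exceeds the available classification range. Thus the delicate step is to verify that the value of $M(8,1)$ from Table \ref{TableMaximum} is small enough to yield a $D_0$ compatible with the tables, and to choose the threshold $7.48$ so that this happens while still being large enough to include the regulators of the four listed fields. Once this bookkeeping is done, the rest of the argument is a routine finite computation performed by PARI/GP, and uniqueness of the four fields follows from the exhaustiveness of the tables.
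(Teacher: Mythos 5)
There is a genuine gap: your plan relies on the Remak–Friedman discriminant bound $D_0$ obtained from $M(8,1)$ falling below the range of the tables in \cite{battistoniMinimum, battistoni2019Arxiv}, and you flag this as the delicate quantitative step. But even with the improved constant this does not happen. Substituting $2\log M(8,1) = 12\log 2$ for $8\log 8$ in \eqref{EstimateRemakFriedman} lowers $D_1$ from $43.7698$ to $35.6632$, so the geometric side gives $|d_K|\leq \exp(35.6632)\approx 3.1\times 10^{15}$, which is about eight orders of magnitude beyond the table threshold $d_1 = 79259702$. Read literally, your proposal would conclude that no finite enumeration is possible, which is the opposite of what the theorem asserts.

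What you are missing is the second half of the Astudillo–Diaz y Diaz–Friedman mechanism: the analytic lower bounds for the regulator via the function $g=g_{r_1,r_2}$ of \eqref{LowerBoundRegolatore} together with Corollary \ref{CoroLowerBoundReg}. Since $g$ has a unique critical point (a maximum), it suffices to check that the relevant multiple of $g$ exceeds $R_0$ at \emph{both} ends of the interval $[d_1,\exp(D_1)]$. Concretely, the paper computes $4g(\exp(-35.6632)) = 102.264\ldots > 7.48$ and $4g(1/79259702) = 7.48749\ldots > 7.48$, whence every field with $79259702\leq |d_K|\leq \exp(35.6632)$ already has $R_K>7.48$. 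Combined with the geometric bound $|d_K|\leq\exp(35.6632)$, this forces any field with $R_K\leq 7.48$ to have $|d_K|<79259702$, placing it inside the complete tables; the improved $M(8,1)$ is essential only because the original $D_1=43.7698$ makes $4g(\exp(-D_1))$ hugely negative, so the Corollary cannot be applied. The final enumeration step you describe (computing regulators via PARI/GP with the correctness check of Remark \ref{RemarkGRH}) is right, but the bridging argument through $g$ is the heart of the proof and cannot be omitted.
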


\begin{thm}\label{ThmDeg5}
Suppose the value of $M(5,1)$ given in Table \ref{TableMaximum} is correct. Then any number field with signature $(3,1)$ and $|d_K|> 48000$ must have $R_K> 2.15$; among the 145 fields of this signature with $|d_K|\leq 48000$ there exist exactly 40 fields with $R_K\leq 2.15$, and they satisfy $|d_K|\leq 25679$. 
\end{thm}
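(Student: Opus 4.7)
The plan is to run the classification procedure of \cite{regulators} for signature $(3,1)$, but with the usual (worst-case) estimate for the polynomial $P$ replaced by the conjectural sharp bound $M(5,1)$ from Table \ref{TableMaximum}. The general machinery, applied to a signature $(r_1,r_2)$, produces an explicit decreasing function $F_{r_1,r_2}$ of $R_K$ such that every field of that signature satisfies $|d_K|\le F_{r_1,r_2}(R_K)$; substituting $M(5,1)$ in place of the default constant yields a strictly smaller function $\widetilde F_{3,1}$.

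The first step is to evaluate $\widetilde F_{3,1}(2.15)$ explicitly and check that it is at most $48000$. Because $\widetilde F_{3,1}$ is monotone decreasing in $R_K$, the contrapositive gives the first half of the statement: any $K$ of signature $(3,1)$ with $|d_K|>48000$ must have $R_K>2.15$. This is the only analytic step, and it is where the improvement actually appears: with the classical Remak--Friedman estimate the corresponding discriminant bound at $R_K=2.15$ is too large to be tabulated exhaustively, so the conjectural $M(5,1)$ is essential.

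The second step is purely computational. Complete tables of number fields of signature $(3,1)$ with $|d_K|\le 48000$ are available (for instance in the LMFDB), and there are exactly $145$ of them; for each such field I would compute $R_K$ via \texttt{bnfinit} in PARI/GP \cite{pari} and select those with $R_K\le 2.15$. I then verify that the selected list has cardinality $40$ and that every field in it satisfies $|d_K|\le 25679$.

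The main obstacle is the first step: one has to track all the auxiliary factors of the Remak--Friedman inequality (the Minkowski-type constants and the integrals in the $r_2$ complex variables) carefully enough to show that plugging in $M(5,1)$ really brings the bound $\widetilde F_{3,1}(2.15)$ below the tabulated threshold $48000$. Everything that follows is mechanical: the enumeration is finite and the regulators are computed unconditionally, so no further subtlety enters once the effective bound is in place.
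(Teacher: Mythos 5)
You have a genuine gap in the first, analytic step. You package the whole reduction as a single monotone estimate $|d_K|\le \widetilde F_{3,1}(R_K)$ coming from the (improved) Remak--Friedman inequality, and you locate the ``main obstacle'' in tracking the Minkowski and $r_2$-integral constants inside that inequality. But the Remak--Friedman bound, even after replacing $5\log 5$ by $2\log M(5,1)$, only gives $\log|d_K|\le D_1\approx 16.90$, i.e.\ $|d_K|\lesssim 2.2\times 10^{7}$. That is still nearly three orders of magnitude above the threshold $48000$, and no further sharpening of the geometric constants will close that gap. The drop from $\sim 2\times 10^{7}$ to $48000$ is achieved by a second, independent input that your plan never mentions: Friedman's explicit analytic lower bound $R_K\ge 2g(1/|d_K|)$ (upgraded to $R_K\ge 4g(1/|d_K|)$ once $|d_K|<\delta_{5,1}$), combined with the unimodality of $x\mapsto g(1/x)$ via Corollary~\ref{CoroLowerBoundReg}. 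The paper checks $2g(\exp(-D_1))=3.404\ldots$, then $2g(1/\delta_{5,1})=2.158\ldots$ to justify passing to the factor $4$, and finally $4g(1/48000)=2.157\ldots>2.15$; only this squeeze between the geometric upper bound $\exp(D_1)$ and the analytically controlled endpoint $48000$ pins $|d_K|\le 48000$ for $R_K\le 2.15$.

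A secondary, smaller issue: the function you describe should be \emph{increasing} in $R_K$, not decreasing (a larger permitted regulator allows a larger discriminant), though your contrapositive is still formally usable once the direction is corrected. The finite enumeration and unconditional regulator checks you outline for the second step match the paper's use of the Kl\"uners--Malle and LMFDB tables and the verification criterion \eqref{CorrettoRegolatoreCaso2}; that part is fine. To repair your argument you need to insert the step that invokes the function $g$ and Corollary~\ref{CoroLowerBoundReg}, since without it the stated discriminant cutoff of $48000$ cannot be reached from the geometric estimate alone.
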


\begin{thm}\label{ThmDeg7}
Suppose the value of $M(7,1)$ given in Table \ref{TableMaximum} is correct. Then any number field with signature $(5,1)$ and $|d_K|> 2\cdot 10^7$ must have $R_K> 8$; among the 528 fields of this signature with $|d_K|\leq 2\cdot 10^7$ there exist exactly 135 fields with $R_K\leq 8$, and they satisfy $|d_K|\leq 11755159$.  
\end{thm}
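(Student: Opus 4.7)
The plan is to apply the classification procedure of Astudillo, Diaz y Diaz and Friedman \cite{regulators}, now sharpened by substituting the conjectural value $M(7,1)$ from Table \ref{TableMaximum} in place of the generic upper bound on the polynomial $P$ that appears in Remak--Friedman's inequality for signature $(5,1)$. The first step is therefore to write down the improved inequality explicitly: it should take the form $|d_K|\leq F(R_K)$ for an explicit increasing function $F$ built from $M(7,1)$ together with the analytic data of a degree $7$ field of this signature. The method is the same one that Friedman and Ramirez-Raposo \cite{RamirezRaposo} used ad hoc for $(5,1)$, but now applied with the cleaner conjectural maximum.

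The second step is to invert this inequality at the threshold $R_K=8$. If the conjecture is correct, the numerical value $F(8)$ must come out strictly below $2\cdot 10^7$; this gives the first half of the theorem by contrapositive, namely that any $K$ of signature $(5,1)$ with $|d_K|>2\cdot 10^7$ necessarily satisfies $R_K>8$. The third step is then to go through the finitely many remaining fields in the range $|d_K|\leq 2\cdot 10^7$, using the complete tables of $(5,1)$ number fields produced in \cite{battistoniMinimum, battistoni2019Arxiv}, which, as recalled in the introduction, comprise exactly $528$ entries. For each one I would compute $R_K$ in PARI/GP \cite{pari}, filter those with $R_K\leq 8$, and check that exactly $135$ survive. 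The auxiliary claim $|d_K|\leq 11755159$ is then read off by inspecting the resulting list.

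The main obstacle is not algorithmic but one of correctness of inputs. The argument depends on (a) the conjectural value of $M(7,1)$, which is granted in the hypothesis, and (b) the completeness of the discriminant tables for signature $(5,1)$ up to $2\cdot 10^7$, which is an external result from the author's earlier work and is crucial: any missing field in this range could upset both the count $135$ and the bound $11755159$. Granted these two inputs, the remaining work is routine, consisting of a one-variable numerical check that $F(8)<2\cdot 10^7$ and of a loop over the $528$ tabulated fields. The delicate point to monitor is that $F(8)$ really is comfortably below the tabulation bound $2\cdot 10^7$; if it were not, the tables would need to be extended further, since sharpening the conjectural $M(7,1)$ is not an option available within the present framework.
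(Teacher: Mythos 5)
Your outline correctly identifies the two external inputs (the conjectural $M(7,1)$ and the completeness of the degree-$7$ tables), but it misrepresents the shape of the argument in a way that would break the proof.

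You claim that substituting $M(7,1)$ into Remak--Friedman's inequality and ``inverting at $R_K=8$'' yields $F(8) < 2\cdot 10^7$. That is not what happens. The improved geometric bound only gives $|d_K|\leq\exp(30.4288)\approx 1.6\times 10^{13}$, which is vastly larger than $2\cdot 10^7$. The entire point of the Astudillo--Diaz y Diaz--Friedman procedure is to \emph{pair} the geometric upper bound $\log|d_K|\leq D_1(R_0,n,r_2)$ with the analytic lower bound $R_K\geq 2g(1/|d_K|)$ of \eqref{LowerBoundRegolatore} (and its factor-$4$ refinement when $|d_K|<\delta_{n,r_2}$). Concretely: one first checks that $2g(\exp(-30.4288))=10.25\ldots>8$, so that no field with $\delta_{7,1}\leq|d_K|\leq\exp(30.4288)$ can have $R_K\leq 8$; then, working below $\delta_{7,1}=\exp(20.1)$ where the factor $4$ is available, one checks $4g(1/(2\cdot 10^7))=8.15\ldots>8$ and invokes Corollary \ref{CoroLowerBoundReg} to conclude that every field with $2\cdot 10^7\leq|d_K|<\delta_{7,1}$ also has $R_K>8$. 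Without this two-step shrinkage via $g$, your argument never reaches the tabulation bound $2\cdot 10^7$, and the enumeration step cannot start. The role of the conjecture is precisely to make the first analytic check pass: with the old $7\log 7$ factor one gets $\exp(37.0334)$ and $2g(\exp(-37.0334))=-527.6\ldots<0$, so the method fails; with $2\log M(7,1)$ it succeeds. Also a small point: the paper draws the degree-$7$ tables from the Kl\"uners--Malle and LMFDB databases, not from \cite{battistoniMinimum, battistoni2019Arxiv}, which concern degree $8$ and $9$.
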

\noindent
This last result is a conjectural improvement to the classification of number fields with signature $(5,1)$ given in \cite{RamirezRaposo}. The fields detected in these theorems are completely and explicitly described in the tables of number fields with the corresponding signatures presented in Kl\"{u}ners-Malle Database \cite{klunersMalle} and LMFDB database \cite{lmfdb}.

%The note is organized as follows.

%Section 2

\subsection*{Acknowledgments} I would like to thank my Ph.D. advisor Giuseppe Molteni for having introduced me to the problem and for his supervision of my work, and Eduardo Friedman for giving me insights on the current state of the research on minimum regulators. I also thank the anonymous referee for all their remarks and suggestions which helped me in improving this paper.

\section{Classifying number fields with small regulator}

\subsection{Remak-Friedman's estimate}
Let $\Fr$ be the family of number fields with signature $(r_1,r_2)$, and let $K\in\Fr$. Let $\Ok$ be its ring of integers, $\Ok^*$ the subgroup of units, $\mu_K$ the finite subgroup of roots of unity contained in $K$ and $\infty_K$ the set of archimedean places of $K$: for every $\epsi\in\Ok^*$ define 
$$m_K(\epsi)\coloneqq \lp\sum_{i=1}^{r_1+r_2} (\log ||\epsi||_{v_i})^2\rp^{1/2}$$
where $||\epsi||_{v_i}$ is the absolute value corresponding to the $i$-th archimedean place $v_i\in\infty_K$. One has $m_K(\epsi)=0$ if and only if $\epsi\in\mu_K$ by Kronecker's theorem.
\begin{lem}\label{LemmaLogReg}
    Let $K\in\Fr$, and let $r\coloneqq r_1+r_2-1>0$. There exists $\epsi\in\Ok^*\setminus \mu_K$ such that
    \begin{equation}\label{EstimateLogReg}
        m_K(\epsi) \leq \gamma_r^{1/2} \lp\sqrt{r+1}R_K\rp^{1/r},
    \end{equation}
    where $\gamma_r$ is the Hermite constant of dimension $r$.
\end{lem}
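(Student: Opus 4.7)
The plan is to recognize $m_K(\epsi)$ as the Euclidean norm of the logarithmic image of $\epsi$, identify the covolume of the log-unit lattice with a known multiple of $R_K$, and then invoke Hermite's theorem on the shortest vector of a lattice of rank $r$.

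First, I would introduce the logarithmic embedding $L\colon\Ok^*\to\R^{r_1+r_2}$ defined by
$$L(\epsi):=\lp\log||\epsi||_{v_1},\ldots,\log||\epsi||_{v_{r_1+r_2}}\rp.$$
The product formula for units forces $L(\epsi)$ to lie in the hyperplane $H:=\{x\in\R^{r_1+r_2}:\sum_i x_i=0\}$, and Dirichlet's unit theorem identifies $\Lambda:=L(\Ok^*)$ with a lattice of full rank $r=r_1+r_2-1$ inside $H$, with $\Ker L=\mu_K$. By the very definition of $m_K$, we have $m_K(\epsi)=||L(\epsi)||$, where $||\cdot||$ denotes the standard Euclidean norm on $\R^{r_1+r_2}$ (which agrees with its restriction to $H$).

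The crucial computation is the covolume--regulator identity $\text{covol}(\Lambda)=\sqrt{r+1}\,R_K$. Fixing a system of fundamental units $\epsi_1,\ldots,\epsi_r$, let $N$ be the $(r+1)\times r$ matrix with entries $N_{ij}=\log||\epsi_j||_{v_i}$. The squared covolume equals the Gram determinant $\det(N^{T}N)$, which by the Cauchy--Binet formula expands as $\sum_{i=0}^{r}(\det M_i)^2$, where $M_i$ is obtained from $N$ by deleting its $i$-th row. Because the rows of $N$ sum to zero, the minors $M_i$ all have the same absolute value, namely $R_K$; hence $\det(N^{T}N)=(r+1)R_K^2$.

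Finally, I would apply Hermite's theorem to $\Lambda$: any rank-$r$ lattice contains a nonzero vector $v$ satisfying $||v||^2 \le \gamma_r\,\text{covol}(\Lambda)^{2/r}$. Lifting such a $v$ to a unit $\epsi\in\Ok^*$ with $L(\epsi)=v$ produces one that is not a root of unity (since $v\neq 0$), and substituting the covolume identity into Hermite's bound yields exactly $m_K(\epsi)\le\gamma_r^{1/2}\lp\sqrt{r+1}\,R_K\rp^{1/r}$. The only delicate step is keeping track of the $\sqrt{r+1}$ factor arising from Cauchy--Binet in the covolume--regulator relation; once that is in place, the rest is a direct application of standard lattice geometry.
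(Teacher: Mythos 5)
Your argument is correct and is essentially the approach the paper takes: the paper simply cites Minkowski's theorem on successive minima (Cassels) and the definition of the Hermite constant (Pohst--Zassenhaus) without spelling out the details, and those details are exactly what you supply — the log-unit lattice of rank $r$ in the trace-zero hyperplane, the covolume identity $\mathrm{covol}(\Lambda)=\sqrt{r+1}\,R_K$ via Cauchy--Binet, and the shortest-vector bound $\|v\|^2\le\gamma_r\,\mathrm{covol}(\Lambda)^{2/r}$. The one thing worth noting is that invoking "Hermite's theorem" here is really just unwinding the definition of $\gamma_r$; Minkowski's theorem on successive minima, which the paper cites, is what gives an upper bound for $\gamma_r$ but is not needed for the inequality as stated.
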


\begin{proof}
The quotient group $\Ok^*/\mu_K$ can be embedded as a lattice of dimension $r$ in the hyperplane $\{x_1+\cdots+x_{r+1}=0\}\subset\R^{r+1}$: the embedding is given by the map which sends $\epsi\in \Ok^*/\mu_K$ to $(\log ||\epsi||_{v_i})_{i=1}^{r+1}$, and in this space $m_K(\epsi)$ becomes the Euclidean length of $\epsi$. 

The definition of $R_K$ is such that the covolume of the lattice is equal to $\sqrt{r+1}R_K$. The claim follows then from Minkowski's theorem on successive minima \cite[ pp. 120, 205, 332]{casselsGeometryNumbers}. For a definition of Hermite constants see \cite[ Chapter 3, Section 3]{pohst1997algorithmic}.
\end{proof}

%Estimate \eqref{EstimateLogReg} is crucial in showing how the discriminant $d_K$ is connected with the regulator $R_K$.

\begin{thm}[Remak, Friedman]
Let $K\in \Fr$ and assume $K=F(\epsi)$ with $F$ a number field and $\epsi\in\Ok^*\setminus\mu_K$. Then 
\begin{equation}\label{EstimateRemakFriedman}
    \log |d_K|\leq [K:F]\log |d_F| + [K:\Q]\log[K:F] + m_K(\epsi)A(K/F) 
\end{equation}
where 
$$A(K/F)\coloneqq \sqrt{\frac{1}{3}\sum_{v\in\infty_F} ([K:F]^3-[K:F]-4r_2(v)^3-2r_2(v)) }$$
and $r_2(v)$ is defined as the number of complex places of $K$ which extend $v$ whenever $v$ is real, otherwise it is zero.
\end{thm}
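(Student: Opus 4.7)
The plan is to combine the tower formula for discriminants, a Vandermonde/Hadamard estimate on the relative part, and a global Cauchy-Schwarz inequality. Set $m := [K:F]$ and $n := [K:\Q]$. The multiplicative identity
$$|d_K| = |d_F|^m \cdot |\Nm_{F/\Q}(\mathfrak{d}_{K/F})|$$
immediately isolates the first term $[K:F]\log|d_F|$ of the claim upon taking logarithms and reduces the problem to bounding $\log|\Nm_{F/\Q}(\mathfrak{d}_{K/F})|$. Since by hypothesis $K = F(\epsi)$ with $\epsi \in \Ok$, the ideal $\mathfrak{d}_{K/F}$ divides the discriminant of the minimal polynomial of $\epsi$ over $\Of$, hence
$$|\Nm_{F/\Q}(\mathfrak{d}_{K/F})| \leq \prod_{\sigma: F \hookrightarrow \mathbb{C}} |\det V_\sigma|^2,$$
where $V_\sigma := (\sigma_i(\epsi)^{j-1})_{1 \leq i,j \leq m}$ is the Vandermonde matrix built from the $m$ embeddings $\sigma_1,\dots,\sigma_m$ of $K$ extending $\sigma$.

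Next I would apply Hadamard's inequality to each $V_\sigma$, using the column bound $\sum_i|\sigma_i(\epsi)|^{2j} \leq m\max_i|\sigma_i(\epsi)|^{2j}$ to obtain $|\det V_\sigma|^2 \leq m^m \max_i|\sigma_i(\epsi)|^{m(m-1)}$. Aggregating over the $n/m$ embeddings of $F$ produces a factor $m^n$, which accounts precisely for the constant $[K:\Q]\log[K:F] = n\log m$ after taking logarithms, and leaves the task of bounding the remaining sum of maxima by $m_K(\epsi)\cdot A(K/F)$.

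The crux of the proof lies in replacing the pointwise bound $m\max$ by a refined place-by-place Hadamard-type estimate that is sensitive to complex conjugation: when an archimedean place $v$ of $F$ has $r_2(v) > 0$ complex extensions to $K$, the corresponding columns of $V_\sigma$ come in conjugate pairs, and the combined squared norm of such a pair can be bounded more tightly than by $m\max$. The resulting local estimate produces, at each $v \in \infty_F$, a linear functional in the logarithms $x_w := \log\|\epsi\|_w$ (for $w$ running over the archimedean places of $K$ above $v$) whose coefficient vector $\vec{c}_v$ has squared norm equal to $\frac{1}{3}(m^3 - m - 4r_2(v)^3 - 2r_2(v))$. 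A global Cauchy-Schwarz inequality then yields
$$\sum_{v \in \infty_F}\vec{c}_v\cdot\vec{x}_v \leq \Big(\sum_{v \in \infty_F}\|\vec{c}_v\|^2\Big)^{\!1/2}\Big(\sum_{w \in \infty_K}x_w^2\Big)^{\!1/2} = A(K/F)\cdot m_K(\epsi),$$
which is the final term of the claimed inequality.

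The main obstacle is precisely this place-wise refinement: the naive Hadamard bound is blind to the real/complex dichotomy and yields $\frac{1}{3}(m^3 - m)$ instead of the stated quantity, so one needs to subtract the contribution $\frac{1}{3}(4r_2(v)^3 + 2r_2(v))$ from the squared coefficient norm by exploiting the conjugation symmetry among the $r_2(v)$ complex pairs. This refinement is the technical heart of Friedman's extension of Remak's argument in \cite{friedmanAnalyticRegulator,friedmanTotalPositivity}, and once it is available the rest of the proof reduces to the tower formula and a single Cauchy-Schwarz application.
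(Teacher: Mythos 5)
The paper itself proves this theorem only by citing \cite{friedmanAnalyticRegulator}, but it does record the key decomposition later, as \eqref{InequalityIntermediate}: after ordering the conjugates by increasing modulus one factors $|D(\epsi)|=\prod_{j}|\epsi_j|^{2(j-1)}\cdot\prod_{i<j}|1-\epsi_i/\epsi_j|^2$, bounds the second factor by $n^n$ via the Remak--Bertin estimate, and feeds the graded weighted sum $\sum_j 2(j-1)\log|\epsi_j|$ into a single Cauchy--Schwarz.

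Your outline captures the frame correctly (tower formula, relative discriminant bounded by a Vandermonde, final Cauchy--Schwarz producing $A(K/F)$), but the middle step is the wrong tool and cannot be repaired as described. Hadamard gives $|\det V_\sigma|^2\le m^m(\max_i|\sigma_i(\epsi)|)^{m(m-1)}$, which collapses everything into a single $m(m-1)\log\max$ per embedding of $F$; it does not yield the graded family of coefficients $0,2,4,\ldots,2(m-1)$ attached to the individual places $w\mid v$, and without that structure the Cauchy--Schwarz ansatz $\sum_v\vec{c}_v\cdot\vec{x}_v$ has no vector $\vec{c}_v$ to pair against $\vec{x}_v=(\log\|\epsi\|_w)_{w\mid v}$. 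What actually produces those coefficients is not a ``refined Hadamard'' but the exact multiplicative Vandermonde factorization $\det V_\sigma=\pm\prod_j\sigma_j(\epsi)^{j-1}\prod_{i<j}(1-\sigma_i(\epsi)/\sigma_j(\epsi))$ after sorting by modulus, together with the Remak--Bertin bound on the $\prod|1-\sigma_i/\sigma_j|$ factor --- precisely \eqref{InequalityIntermediate}. Hadamard is inherently blind to the ordering of the $|\sigma_i|$ and discards this structure at the outset. Two secondary slips: in $V_\sigma=(\sigma_i(\epsi)^{j-1})_{i,j}$ it is the \emph{rows}, not the columns, that occur in complex-conjugate pairs; and to land on the squared norm $\frac13(m^3-m)$ rather than $\frac23m(m-1)(2m-1)$ you must first recenter the coefficient vector using $\sum_w\log\|\epsi\|_w=0$, which requires $\epsi$ to be a unit (your sketch quietly weakens the hypothesis from $\epsi\in\Ok^*$ to $\epsi\in\Ok$).
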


\begin{proof}
See \cite[Lemma 3.4]{friedmanAnalyticRegulator}.
\end{proof}
\noindent
Assume $K=\Q(\epsi)$ is of degree $n$, with $\epsi$ as in Lemma \ref{LemmaLogReg}: then inequalities \eqref{EstimateLogReg} and \eqref{EstimateRemakFriedman} give 
\begin{equation}\label{D1}
\log |d_K|\leq n\log n + \gamma_r^{1/2} \lp\sqrt{r+1}R_K\rp^{1/r}A(K/\Q)=: D_1(R_K,n,r_2).     
\end{equation}

If instead $\Q(\epsi)\subsetneq K,$ then an iterative adaptation of  \eqref{EstimateRemakFriedman}, obtained by applying the former result to consecutive pair of fields in the tower $\Q\subset \Q(\epsi)=F_1\subset F_2\subset\cdots\subset F_m=K$, gives a different upper bound $\log|d_K|\leq D_2(R_K,n,r_2).$  In any case, we obtain an estimate 
\begin{equation}\label{GeneralizedRemakFriedman}
    \log|d_K|\leq D(R_K,n,r_2) \hspace{0.5cm}
\end{equation}
with $D$ is equal to either $D_1$ or $D_2$ depending on $K=\Q(\epsi)$ or not.

\begin{rmk}
If the degree $n$ of $K$ is a prime number, then $K=\Q(\epsi)$ and we immediately have $D=D_1$.
\end{rmk}

\subsection{Analytic lower bounds}
The previous number-geometric estimates give an upper bound for the discriminant in terms of the regulator. We recall now an estimate which gives instead a lower bound for the regulator and is crucial for the success of the classification of number fields with small regulators.

For every signature $(r_1,r_2)$ let $n\coloneqq r_1+2r_2$. Let $g_{r_1,r_2}:(0,+\infty)\rightarrow \R$ be the function defined as
\begin{equation*}
    g_{r_1,r_2}(x)\coloneqq \frac{1}{2^{r_1}4\pi i}\int_{2-i\infty}^{2+i\infty}\lp\pi^n 4^{r_2}\cdot x\rp^{-s/2}\Gamma(s/2)^{r_1}\Gamma(s)^{r_2}(2s-1)ds
\end{equation*}
where $\Gamma(s)$ is the usual Gamma function.
\begin{lem}\label{lemmapropg}
The function $g_{r_1,r_2}$ satisfies the following properties.
\begin{itemize}
    \item $g_{r_1,r_2}$ is a smooth function over $(0,+\infty)$. 
     \item $g_{r_1,r_2}$ has a unique zero $x_0\in (0,+\infty)$, $g_{r_1,r_2}(x)<0$ for $x<x_0$ and $g_{r_1,r_2}(x)>0$ for $x> x_0$.
    \item $g_{r_1,r_2}$ has a unique critical point $x_1\in (0,+\infty)$, which is a maximum point. 
    \item For every open interval $(a,b)\subset (0,+\infty)$ and for every $x\in(a,b)$ one has $g_{r_1,r_2}(x) > min (g_{r_1,r_2}(a),g_{r_1,r_2}(b))$.
    %\item If $K\in\Fr$, then 
\end{itemize}
\end{lem}
\begin{proof}
The smoothness of the function $g_{r_1,r_2}$ is a direct consequence of the absolute convergence of the integral
$$\int_{2-i\infty}^{2+\infty} x^{-s}\cdot s\Gamma(s/2)^{n}\Gamma(s)^{m}ds$$
for $x\in [a,b]\subset (0,+\infty)$ and $n,m>0$, where the convergence is a consequence of Stirling's asymptotic formula for the Gamma function.

The second and the third statement are proven in \cite[Theorem 2]{friedmanTotalPositivity} using the theory of total positivity developed by Karlin \cite{karlinTotalPositivity}, and the fourth one is an immediate consequence of these.
\end{proof}
The link between the function $g_{r_1,r_2}$ and the regulators of number fields is explicitly presented in the following formula.
\begin{thm}
Let $K\in\Fr$ with discriminant $d_K$ and regulator $R_K$ and let $w_K\coloneqq \#\mu_K$. Then
\begin{equation}\label{explicitFormula}
    \frac{R_K}{w_K} = \sum_{\mathfrak{a}\in\mathfrak{A}}g_{r_1,r_2}\lp\frac{\Nm(\mathfrak{a})^2}{|d_K|}\rp + \sum_{\mathfrak{b}\in\mathfrak{B}}g_{r_1,r_2}\lp\frac{\Nm(\mathfrak{b})^2}{|d_K|}\rp
\end{equation}
where $\mathfrak{A}$ is the set of principal ideals of $\Ok$, $\mathfrak{B}$ is the set of ideals in $\Ok$ equivalent to $\diff$ in the class group of $K$, with $\diff$ being the different ideal of $\Ok$, and $\Nm(\mathfrak{I})$ is the absolute norm of an ideal $\mathfrak{I}\subset\Ok$.
\end{thm}

\begin{proof}
This is proved in \cite[Theorem 2.1]{friedmanAnalyticRegulator}. The proof relies on the meromorphic function
$$\xi_K(s)\coloneqq |d_K|^{s/2}\lp\pi^{-s/2}\Gamma(s/2)\rp^{r_1}\Big(\lp 2\pi\rp^{-s}\Gamma(s)\Big)^{r_2}\lp\sum_{\mathfrak{a}\in\mathfrak{A}}\Nm(\mathfrak{a})^{-s}+\sum_{\mathfrak{b}\in\mathfrak{B}}\Nm(\mathfrak{b})^{-s}\rp$$
which satisfies the functional equation $\xi_K(1-s)=\xi_K(s)$ and has poles, which are simple, only at 0 and 1, the residue at 1 being equal to $(2^{r_1}R_Kc_K)/w_K$ where $c_K=2$ if the different ideal $\diff$ is not principal and $c_K=1$ otherwise. From this, one obtains the equality
\begin{align*}
    \frac{1}{2\pi i}\int_{2-i\infty}^{2+i\infty}(2s-1)\xi_K(s)ds = \frac{2^{r_1}R_Kc_K}{w_K} + \frac{1}{2\pi i}\int_{1/2-i\infty}^{1/2+i\infty}(2s-1)\xi_K(s)ds
\end{align*}
where the last integral is 0 since $(2s-1)\xi_K(s)$ is odd with respect to the transformation $s\to 1-s$. The obtained expression is finally verified to be equal to \eqref{explicitFormula}.
\end{proof}

\begin{coro}\label{coroprimolower}
Let $K\in\Fr$ with discriminant $d_K$ and regulator $R_K$, and assume we have\\ $g_{r_1,r_2}(1/|d_K|)>0$. Then
    \begin{equation}\label{LowerBoundRegolatore}
        R_K \geq 2g_{r_1,r_2}\lp\frac{1}{|d_K|}\rp.
    \end{equation}
\end{coro}
\begin{proof}
The trivial ideal in $\Ok$ is principal and with norm 1, giving rise to the term $g_{r_1,r_2}\lp 1/|d_K|\rp$ in the right hand side of \eqref{explicitFormula}. Since this term is positive, every other term in the sum must be positive by the second statement of Lemma \ref{lemmapropg}, so that $R_K/w_K\geq g_{r_1,r_2}(1/|d_K|)$. We obtain \eqref{LowerBoundRegolatore} since $w_K\geq 2$.
\end{proof}
%The precise form of the function $g$ and how to compute its values can be found in \cite{friedmanAnalyticRegulator}, together with the proof of the third property. The remaining properties are proved in \cite{friedmanTotalPositivity}. 

\begin{coro}\label{CoroLowerBoundReg}
Let $K\in\Fr$, and let $R_0>0$. Assume that $0<d_1\leq |d_K|\leq d_2$ and $2g_{r_1,r_2}(1/d_1)>R_0$, $2g_{r_1,r_2}(1/d_2)>R_0$. Then 
$$R_K \geq 2g_{r_1,r_2}(1/|d_K|) > R_0.$$
\end{coro}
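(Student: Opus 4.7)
The plan is to exploit the unimodality of $g$ granted by the second bulleted property of the theorem: $g$ has a unique critical point $x_1$ which is its global maximum, so $g$ is strictly increasing on $(0,x_1]$ and strictly decreasing on $[x_1,+\infty)$. I would first observe that the hypotheses $2g(1/d_1)>R_0>0$ and $2g(1/d_2)>R_0>0$, combined with the sign description of $g$ (negative before $x_0$, positive after), force both $1/d_1$ and $1/d_2$ to lie in the region where $g$ is positive; in particular $1/d_1,1/d_2>x_0$.

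Next I would use $0<d_1\le |d_K|\le d_2$ to place $1/|d_K|$ in the closed interval $[1/d_2,1/d_1]$. The key observation is that any unimodal function attains its minimum on a closed subinterval at one of the endpoints: indeed, if the interior maximum $x_1$ does not belong to $[1/d_2,1/d_1]$, then $g$ is monotone there and the claim is clear; if $x_1\in[1/d_2,1/d_1]$, then $g$ increases on $[1/d_2,x_1]$ and decreases on $[x_1,1/d_1]$, so the minimum is $\min(g(1/d_2),g(1/d_1))$ in either case. This yields
\[
g(1/|d_K|)\;\ge\;\min\bigl(g(1/d_1),\,g(1/d_2)\bigr)\;>\;R_0/2.
\]

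Finally, I would invoke the third bulleted property (Estimate \eqref{LowerBoundRegolatore}) to conclude
\[
R_K\;\ge\;2g(1/|d_K|)\;>\;R_0,
\]
which is the desired inequality.

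There is no real obstacle here: the argument is a direct consequence of the three listed properties of $g$, and the only point that needs a brief justification is the elementary fact that a unimodal function on an interval attains its minimum at an endpoint. Given how the corollary is used later (to certify that $R_K>R_0$ for all fields in a discriminant band by checking only the two endpoints of the band), this observation is precisely what makes the statement effective in the classification procedure.
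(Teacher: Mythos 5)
Your proof is correct and is the natural derivation the paper implicitly relies on: the paper states the corollary without proof, and your argument—placing $1/|d_K|$ in $[1/d_2,1/d_1]$, using the unimodality of $g$ from the stated uniqueness of the critical point to bound $g$ on that interval from below by its value at an endpoint, and then invoking estimate \eqref{LowerBoundRegolatore}—is exactly what is needed. No gap.
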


\begin{proof}
This is a direct consequence of the fourth statement of Lemma \ref{lemmapropg} and of the previous corollary.
\end{proof}

\begin{rmk}
Given $K\in\Fr$, if the different ideal $\diff$ is principal then the two sums in the right hand side of \eqref{explicitFormula} are equal: this means that the factor 2 in the corollaries \ref{coroprimolower} and \ref{CoroLowerBoundReg} can be replaced with a factor 4. 

There is a sufficient criterion to determine if $\diff$ is principal. We need a definition: if $K$ is a number field of degree $n$ and discriminant $d_K$, the root discriminant of $K$ is $|d_K|^{1/n}$. It is known that for every signature $(r_1,r_2)$ there exists $\delta_{r_1,r_2}>0$ such that $|d_K|^{1/n} > \delta_{r_1,r_2}$ for every $K\in\mathcal{F}_{r_1,r_2}$.

\begin{lem}\label{lemdifferent}
Let $K\in\Fr$ and assume $|d_K|^{1/[K:\Q]}< \delta_{3r_1,3r_2}$ where $\delta_{3r_1,3r_2}$ is a proved lower bound for the root discriminant of fields with signature $(3r_1,3r_2)$. Then $\diff\subset \Ok$ is a princiapl ideal.
\end{lem}

\begin{proof}
Assume $\diff$ is not principal. Since the class of $\diff$ is known to be a square in the class group of $K$ \cite[p.234]{heckeLectures}, the size of this group must be at least 3. By Class Field Theory, there exists an abelian unramified extension $L/K$ of degree at least 3, and the absence of ramification implies
$$|d_K|^{1/[K:\Q]} = |d_L|^{1/[L:\Q]},\hspace{0.2cm} \frac{r_1(K)}{[K:\Q]}=\frac{r_1(L)}{[L:\Q]}.$$
Now, if the ratio $r_1/n$ is fixed, the root discriminant increases monotonically in the degree $n$ of the field \cite{pohst1975berechnung} and so $|d_K|^{1/[K:\Q]}\geq \delta_{3r_1,3r_2}$. 
\end{proof}
Lower bounds for root discriminants can be found in Diaz y Diaz' tables \cite{y1980tables} for several degrees and signatures.
\end{rmk}

\subsection{The procedure} We illustrate the method proposed by Astudillo, Diaz y Diaz and Friedman \cite{regulators} for the classification of fields with given signature and bounded regulator: it is based upon Remak-Friedman's inequality \eqref{EstimateRemakFriedman}, Corollary \ref{CoroLowerBoundReg} and Lemma \ref{lemdifferent}, and it allowed the authors to get the classification of number fields with small regulators for the degrees and signatures described in the introduction.
\begin{itemize}
    \item[a)] Choose an upper bound $R_0$ for $R_K$, and compute $D(R_0,n,r_2)$ as in \eqref{GeneralizedRemakFriedman} using Remak-Friedman's inequality. Then, a field $K\in\Fr$ with $R_K\leq R_0$ is forced to have $|d_K|\leq \exp(D(R_0,n,r_2)).$
    \vspace{0.2cm}
    \item[b)] If $\exp(D(R_0,n,r_2))< \delta_{3r_1,3r_2}$, verify that $4g_{r_1,r_2}(\exp(-D(R_0,n,r_2)))> R_0$ and look for the smallest possible value $d_1>0$ such that $4g_{r_1,r_2}(1/d_1)>R_0$. Then a field $K\in\Fr$ with $R_K\leq R_0$ must have $|d_K|\leq d_1$.\vspace{0.2cm}
    \item[c)] If $\exp(D(R_0,n,r_2))\geq \delta_{3r_1,3r_2}$, verify that $2g_{r_1,r_2}(\exp(-D(R_0,n,r_2)))> R_0$ and look for the smallest possible value $d_2>0$ such that $2g_{r_1,r_2}(1/d_2)>R_0$. Then repeat b) by replacing $\exp(D(R_0,n,r_2))$ with $d_2$.\vspace{0.2cm}
    \item[d)] Given $d_1$ from b) or c), compute the regulator of every number field $K\in\Fr$ with $|d_K|<d_1$ and list the fields with $R_K\leq R_0$.
\end{itemize}
    There are two important conditions which must be satisfied in order for this procedure to work properly.
    \begin{itemize}
        \item[1)] One needs  $4g_{r_1,r_2}(\exp(-D(R_0,n,r_2)))$ (or $2g_{r_1,r_2}(\exp(-D(R_0,n,r_2)))$) to be larger than $R_0$. If these values are slightly below $R_0$, a refinement of the procedure involving further terms in the right hand side of \eqref{explicitFormula} may be helpful (see \cite[Lemma 5]{regulators}).\vspace{0.15cm}
        %if the values are slightly under $R_0$, there is a refinement of the procedure \cite[Lemma 5]{regulators}, which can help in giving a weaker yet still sufficient inequality.
        \item[2)]  One needs complete tables of number fields $K\in\Fr$ with $|d_K|\leq d_1$, so that one can be certain to have considered every field of $\Fr$.
    \end{itemize}

\begin{rmk}\label{RemarkGRH}
    The computation of $R_K$ for $|d_K|\leq d_1$ is done via PARI/GP, which provides a value of $R_K$ whose correctness depends on the assumption of Generalized Riemann Hypothesis for Dedekind Zeta functions \cite[p. 353]{cohenComputational}. However, Astudillo, Diaz y Diaz and Friedman \cite[Section 3.3]{regulators}  give a condition to verify that the output value $\Tilde{R}_K$ is unconditionally correct: in fact, the output $\Tilde{R}_K$ is an integral multiple of the true value $R_K$, since the algorithm unconditionally identifies a subgroup of $\Ok^*$ of finite index. If $\Tilde{R}_K = m R_K$ with $m\in\N$, looking back at the explicit formula \eqref{explicitFormula} and assuming $g_{r_1,r_2}(1/|d_K|)>0$, we obtain
    \begin{equation*}
         0<m=\frac{\Tilde{R}_K}{R_K} <\frac{\Tilde{R}_K}{4\cdot\lp g_{r_1,r_2}\lp\frac{1}{|d_K|}\rp + c\cdot g_{r_1,r_2}\lp\frac{4}{|d_K|}\rp \rp}
    \end{equation*}
  where $c$ is the number of ideals in $\Ok$ with norm 2. In order to have $\Tilde{R}_K = R_K$ it is thus sufficient to have  
     \begin{equation}\label{CorrettoRegolatoreCaso2}
       g_{r_1,r_2}\lp\frac{1}{|d_K|}\rp>0,\hspace{0.2cm} 0<\frac{\Tilde{R}_K}{4\cdot\lp g_{r_1,r_2}\lp\frac{1}{|d_K|}\rp + c\cdot g_{r_1,r_2}\lp\frac{4}{|d_K|}\rp \rp}<2.
    \end{equation}

%    In Section FINALE, the regulator of every field described in Theorems TEOREMI has been computed using PARI and the output value $\Tilde{R}_K$ satisfies either \eqref{CorrettoRegolatoreCaso1} or \eqref{CorrettoRegolatoreCaso2}. 
\end{rmk}

\subsection{Attempt of classification for signatures in degree 8}\label{sectionfirstattempt}
Let us apply the previous procedure to signatures (2,3), (4,2) and (6,1) in degree 8, for which no classification has been given already.

Let $(r_1,r_2)$ be one of these signatures. From \cite{battistoniMinimum, battistoni2019Arxiv} one gets complete tables of fields $K\in\Fr$ with $|d_K|\leq d_1$, where $d_1$ depends on the signature and is specified in Table \ref{TabellaSegnature8}. For every field in the list, we compute their regulators $R_K$ using PARI/GP and we verify that the output satisfies \eqref{CorrettoRegolatoreCaso2}. We define $R_m$ as the smallest of the detected regulators: one notices that, for each of the considered signatures, $R_m$ is actually attained at the fields with minimum discriminant. Then we choose a number $R_0 >R_m$ obtained from a truncation of the decimal part of $R_m$ and we verify that $4g_{r_1,r_2}(1/d_1)> R_0.$

Next, we use Remak-Friedman's inequality in order to get an upper bound $\log|d_K|\leq D(R_0,n,r_2)$ for fields $K\in\Fr$ with $R_K\leq R_0$: explicit computations show that the bigger upper bound among $D_1(R_0,n,r_2)$ and $D_2(R_0,n,r_2)$ is the first one, given in \eqref{D1}. Furthermore, one verifies that $\exp(D_1(R_0,n,r_2))< \delta_{3r_1,3r_2}$ for the considered signatures and so, if one was able to prove that $4g_{r_1,r_2}(\exp(-D_1(R_0,n,r_2)))> R_0$, a field $K\in\Fr$ with $R_K\leq R_0$ should have $|d_K|\leq d_1$ and would be contained in our lists. 

\begin{table}[H]
\caption{}
\begin{center}
\begin{tabular}{c|c|c|c|c|c|c}
	$(r_1,r_2)$ & $d_1$ & $R_m$ & $R_0$ & $4g(1/d_1)$ & $D_1(R_0,n,r_2)$ & $4g(\exp(-D_1))$\\
	\hline
	(2,3) & 5726300 & 0.83140\ldots & 0.832 & 0.93299\ldots & 32.47101\ldots & -41.14908\ldots\\
	\hline
	(4,2) & 20829049 & 2.29779\ldots & 2.298 & 2.60466\ldots & 38.3603\ldots& -677.394\ldots\\
	\hline
	(6,1) & 79259702 &7.13506\ldots & 7.14 & 7.487499\ldots & 43.7697\ldots & -6926.41\ldots\\
	\hline
\end{tabular}
\end{center}
\label{TabellaSegnature8}
\end{table}

However, as reported in Table \ref{TabellaSegnature8}, the values of $4g_{r_1,r_2}(\exp(-D_1))$ are not only negative, but also large in modulus, and this fact prevents from any improvement possibly obtained by using the adaptation proposed in \cite[Lemma 5]{regulators}. We are thus losing Condition 1) of the procedure, preventing us from applying
%and so we cannot apply 
completely the classification method for these signatures. 

Given this, the goal of the next section will be to provide insights on how to obtain (conjectural) improvements that allow to solve these difficulties for the examined signatures in degree 8.

\section{Considerations on Remak-Friedman's inequality}
As mentioned in the introduction, condition 1) of the procedure was not satisfied in \cite{regulators} for signature (5,1) already, and the classification of number fields with this signature was accomplished later by Friedman and Ramirez-Raposo \cite{RamirezRaposo}. Their attempt was successful thanks to a modification of  a specific term of Remak-Friedman's inequality: the important contribution consisted in showing that, in order to get the improvement, it was necessary to keep into account the signature of the problem. Inspired by this fact, we look at how Remak-Friedman's inequality is proved and what terms should be considered for a possible improvement.

Remember the assumption on the number field $K$, i.e. $K=\Q(\epsi)$ with $\epsi\in\Ok^*\setminus\mu_K$. Let  $\epsi_1,\epsi_2\ldots,\epsi_n$ be the $K$-conjugates of $\epsi$, ordered so that $|\epsi_i|\leq |\epsi_j|$ for $i\leq j$. Let $D(\epsi)\coloneqq \prod_{1\leq i \leq j\leq n}(\epsi_i -\epsi_j)^2$, then $d_K$ divides $D(\epsi)$ and
\begin{equation}\label{InequalityIntermediate}
    \log |d_K|\leq \log |D(\epsi)| = \log \lp\prod_{1\leq i<j\leq n}\left|1-\frac{\epsi_i}{\epsi_j}\right|^2\rp + \sum_{j=2}^{n}2(j-1)\log |\epsi_j|.
\end{equation}
The second term in the right hand side of \eqref{InequalityIntermediate} is estimated by $m_K(\epsi)\cdot A(K/\Q)$, while the first term is estimated by $n\log n$. Looking at the proof in \cite[Lemma 3.4]{friedmanAnalyticRegulator}, the factor $A(K/\Q)$ is easily seen to be sharp for even degrees, while $m_K(\epsi)$ depends too much on the specific field to provide an inequality better than \eqref{LemmaLogReg}: we look then for an improvement to the other term, and we try this by adopting a more general point of view.

Let $\epsi_1,\ldots,\epsi_n\in\C\setminus\{0\}$ be such that $|\epsi_i|\leq |\epsi_j|$ for $i\leq j$. Define the function
\begin{align}\label{FakeDiscriminant}
    P(\epsi_1,\ldots,\epsi_n)\coloneqq \prod_{1\leq i<j\leq n}\left|1-\frac{\epsi_i}{\epsi_j}\right|^2.
\end{align}
Observe that we can always assume that the numbers $\epsi_j$ are less or equal than 1 in absolute value, because the values of the function $P$ do not change whenever the numbers $\epsi_i$ are divided by $|\epsi_n|$. Thus, we can always think of $P$ as a function defined on the set 
\begin{equation}\label{SetConsidered}
    \{0 < |\epsi_1|\leq |\epsi_2|\leq \cdots\leq |\epsi_n|\leq 1\}.
\end{equation}
The following theorem \cite{bertin} states the basic bound for $P$, which is the one used in Remak-Friedman's inequality.

\begin{thm}\label{ThmBertin}
Let $P(\epsi_1,\ldots,\epsi_n)$ be defined as in (\ref{FakeDiscriminant}). Then $|P(\epsi_1,\ldots,\epsi_n)|\leq n^n$.
\end{thm}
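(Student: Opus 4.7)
My plan is to combine the scale invariance of $P$, a subharmonicity/maximum-principle argument in each coordinate, and Hadamard's inequality on the Vandermonde matrix. To start, I would observe that each factor $1 - \epsi_i/\epsi_j$ is invariant under the simultaneous rescaling $\epsi_i \mapsto \lambda\epsi_i$, so I may normalize $|\epsi_n| = 1$ and look for the maximum of $P$ on the compact region $R = \{0 < |\epsi_1| \leq \cdots \leq |\epsi_{n-1}| \leq |\epsi_n| = 1\}$.

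Next, freezing all coordinates except $\epsi_k$ and expanding, one gets
\[
\log P = \mathrm{const} + 2\sum_{i \neq k}\log|\epsi_k - \epsi_i| - 2(k-1)\log|\epsi_k|.
\]
Each term $\log|\epsi_k - \epsi_i|$ is subharmonic, and $-(k-1)\log|\epsi_k|$ is harmonic away from the origin. On the annular slice $|\epsi_{k-1}| \leq |\epsi_k| \leq |\epsi_{k+1}|$ the origin is excluded when $k \geq 2$, while for $k = 1$ the problematic term has coefficient zero; in either case $\log P$ is subharmonic as a function of $\epsi_k$. The maximum principle then forces any maximizer to satisfy $|\epsi_k| \in \{|\epsi_{k-1}|, |\epsi_{k+1}|\}$. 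Iterating over $k$, together with an induction on $n$ to cover the degenerate limits $\epsi_j \to 0$ (where $P$ collapses to the same functional in fewer variables, bounded by $(n-1)^{n-1} < n^n$), reduces the problem to the locus $|\epsi_1| = \cdots = |\epsi_n| = 1$.

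On the unit torus, all weights $|\epsi_j|^{2(j-1)}$ equal $1$, so
\[
P = \prod_{i<j}|\epsi_i - \epsi_j|^2 = |\det V|^2,
\]
where $V = (\epsi_i^{j-1})_{1\leq i,j\leq n}$ is the Vandermonde matrix of the $\epsi_i$. Applying Hadamard's inequality to the columns of $V$,
\[
|\det V|^2 \leq \prod_{j=1}^n \sum_{i=1}^n |\epsi_i|^{2(j-1)} = \prod_{j=1}^n n = n^n,
\]
which gives the announced bound.

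The delicate point I foresee is the iteration in the subharmonicity step: a single application of the maximum principle only pins $|\epsi_k|$ to one of two values, so one must argue carefully that the global supremum really lies in the ``all moduli equal'' stratum and is not trapped on an intermediate plateau of the form $(r,\ldots,r,s,\ldots,s)$ with $r<s=1$. Once this reduction is secured, the concluding Hadamard estimate is routine and the inequality is sharp, equality being attained at the $n$-th roots of unity (up to the global scaling).
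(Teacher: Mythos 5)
The paper itself gives no proof of this theorem; it simply cites Bertin \cite{bertin} and, in a remark a few lines later, even notes that Bertin's argument for the stronger $4^{\lfloor n/2\rfloor}$ inequality has a gap on the boundary analysis. So there is no internal proof to compare your attempt against, and your blind reconstruction has to be judged on its own terms.

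Your general plan (normalize $|\epsi_n|=1$, reduce to the unit torus, then apply Hadamard to the Vandermonde matrix) is correct in outline, and the final step is exactly right: on $|\epsi_1|=\cdots=|\epsi_n|=1$ every entry of the Vandermonde matrix $V=(\epsi_i^{j-1})$ has modulus $1$, every column norm is $\sqrt n$, and Hadamard yields $P=|\det V|^2\le n^n$. You also handle the degenerate boundary $\epsi_1\to 0$ correctly by induction. The issue is precisely the one you flagged yourself: coordinate-by-coordinate use of the maximum principle in a single $\epsi_k$ only pins $|\epsi_k|$ to one of the two neighbouring radii, and iterating never escapes a stratified configuration $|\epsi_1|=\cdots=|\epsi_m|=r<s=|\epsi_{m+1}|=\cdots=|\epsi_n|$. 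As written, nothing in the argument rules this out, so the reduction to the torus is unproved and the chain of reasoning is broken at its crucial hinge. This is a genuine gap, not a cosmetic one.

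The good news is that it can be filled with essentially the same tool you already invoke, applied to a block rather than to a single coordinate. Suppose a maximizer $\epsi^*$ has $|\epsi_m^*|=r<s=|\epsi_{m+1}^*|$ and all $\epsi_i^*\neq 0$. Perturb the whole lower block by a complex parameter $w$, setting $\epsi_i\mapsto w\epsi_i^*$ for $i\le m$ and leaving the others fixed, with $|w|\le s/r$. In the expansion
\[
\log P=2\sum_{i<j}\log|\epsi_j-\epsi_i|-2\sum_{j}(j-1)\log|\epsi_j|,
\]
the terms with both $i,j\le m$ contribute $2\binom m2\log|w|$, and the subtracted term $\sum_{j\le m}(j-1)\log|w\epsi_j^*|$ contributes the same $2\binom m2\log|w|$, so the two cancel. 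What remains is
\[
\log P(w)=2\sum_{i\le m<j}\log\bigl|\epsi_j^*-w\,\epsi_i^*\bigr|+\text{const},
\]
which is subharmonic in $w$ on the disc $|w|\le s/r$ and is nonconstant whenever some $\epsi_i^*\neq 0$. A nonconstant subharmonic function cannot have an interior maximum, yet $w=1$ lies strictly inside that disc; contradiction. Hence the moduli must all be equal, and the torus reduction holds. Supplying this block-rescaling step (or an equivalent device) is what your write-up is missing; without it the claimed reduction is an assertion rather than a proof.
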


%\begin{proof}
%Define the change of variables 
%$$\rho_i\coloneqq \epsi_i/\epsi_{i+1} \hspace{0.3cm}\text{for } i=1,\ldots,n-1.$$ 
%The not-negative function $P(\rho_1,\ldots,\rho_{n-1})$ can be now thought as a function defined over $[-1,1]^{n-1}$, and an estimate over this larger set yields immediately the estimate we look for. In this new setting, $P$ can be rewritten as
%\begin{align*}
%    P(\rho_1,\ldots,\rho_{n-1})\coloneqq \prod_{i=1}^{n-1}\prod_{j=i}^{n-1}\left|1-\prod_{k=i}^j \rho_k\right|^2.
%\end{align*}
%The main part of the proof, which is explained with every detail in \cite{bertin}, is to show that $P(\rho_1,\ldots,\rho_{n-1})$ is the square of the determinant of a matrix $M$ whose columns have euclidean norm less or equal than $\sqrt{n}$. As an example, when $n=3$ the matrix $M$ has the form
%$$M\coloneqq\lp
%\begin{matrix}
%1 & \rho_1 & \rho_1^2\rho_2\\
%1 & 1 & \rho_2\\
%\rho_2 & 1 &1.
%\end{matrix}
%\rp$$
%The claim follows then by Hadamard's Lemma (\cite{salem}, Chapter 1, Section 1, Lemma IV)..
%\end{proof}

This general estimate does not take into account anything related to the signature. Assume however that the $\epsi_i$'s are all real, i.e. that $K$ is a totally real field.

\begin{thm}\label{thmPohst}
Let $n\in\N$ with $n\geq 2$ and let $\epsi_1,\ldots,\epsi_n$ be real numbers in $[-1,1]$ such that $|\epsi_i|\leq |\epsi_j|$ if $i\leq j$. Then $$P(\epsi_1,\ldots,\epsi_n)\leq 4^{\lfloor\frac{n}{2}\rfloor}.$$
\end{thm}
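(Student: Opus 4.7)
My plan is to exploit symmetries of $P$. The function is invariant under both the common scaling $\epsi_i \mapsto \lambda\epsi_i$ and the global sign reversal $\epsi_i \mapsto -\epsi_i$, so after normalization I may assume $\epsi_n = 1$; the problem becomes an optimization on the compact set $\Omega_n = \{(\epsi_1,\ldots,\epsi_{n-1})\in[-1,1]^{n-1}:|\epsi_1|\leq\cdots\leq|\epsi_{n-1}|\leq 1\}$. I would then identify the extremal shape heuristically: for $m = \lfloor n/2\rfloor$, the ``nested opposite pair'' configuration $\epsi_{2k-1}=-\epsi_{2k}=-\rho_k$ with $0<\rho_1<\cdots<\rho_m=1$ yields, by direct expansion,
$$P = 4^m\prod_{k<l}(1-\rho_k^2/\rho_l^2)^4,$$
which tends to $4^m = 4^{\lfloor n/2\rfloor}$ as the ratios $\rho_k/\rho_l$ for $k<l$ go to $0$; for odd $n$ one also lets $\epsi_1\to 0$. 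This pins down the target value and shows that the supremum is approached but not attained in the open region.

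To prove the upper bound I would use induction on $n$, grouping the largest two variables. The base cases $n=1,2,3$ are immediate ($P\leq 1,4,4$ respectively). For the inductive step I would write
$$P(\epsi_1,\ldots,\epsi_n) = P(\epsi_1,\ldots,\epsi_{n-2})\cdot R(\epsi_1,\ldots,\epsi_n),$$
where $R$ gathers the new factors involving $\epsi_{n-1}$ or $\epsi_n$. The clean bound $R\leq 4$ does not hold in general: if the lower variables have comparable absolute value to $\epsi_{n-1}$ and the opposite sign, $R$ can be arbitrarily large. However, precisely in such configurations the factor $P(\epsi_1,\ldots,\epsi_{n-2})$ is small, because same-sign pairs with comparable absolute value contribute terms $(1-\epsi_i/\epsi_j)^2\approx 0$, so a careful trade-off should recover the bound $P\leq 4\cdot 4^{\lfloor(n-2)/2\rfloor}=4^{\lfloor n/2\rfloor}$.

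The main obstacle is making this trade-off rigorous. Concretely, one must fix the sign pattern of $(\epsi_{n-1},\epsi_n)$ and of the lower $\epsi_i$'s, and for each such pattern either set the partial derivatives $\partial_{\epsi_k}\log P=0$ and solve the resulting polynomial system on the interior of $\Omega_n$, or reduce to a boundary piece where some $|\epsi_k|=|\epsi_{k+1}|$, $\epsi_k=0$, or $\epsi_k=\pm\epsi_{k+1}$, and apply the inductive hypothesis. The number of subcases grows combinatorially with $n$, and the restriction $n\leq 11$ in the statement reflects the feasibility threshold for this casework; for each such $n$ the list can be closed by an explicit enumeration, which I would carry out degree by degree.
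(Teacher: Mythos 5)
Your plan differs from the paper's argument in one crucial structural respect. The paper first passes to the consecutive-ratio variables $\rho_i := \epsi_i/\epsi_{i+1}\in[-1,1]$, under which $Q=\sqrt P$ becomes a product of $\binom{n}{2}$ factors of the very uniform shape $(1-\rho_i\rho_{i+1}\cdots\rho_{j-1})$; the whole proof then lives inside the sign casework on the $\rho_i$, where it bounds \emph{blocks} of such factors --- a ``triangular'' block $(1-\rho_i)(1-\rho_i\rho_j)(1-\rho_j)\leq 2$ or a ``square'' block of four factors $\leq 1$ --- and multiplies the block estimates. That change of variables is what turns the vague trade-off you gesture at into a collection of clean one-line inequalities. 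You instead work directly with the $\epsi_i$ and split off the two \emph{largest} coordinates, writing $P_n=P_{n-2}\cdot R$; this is not the paper's decomposition.

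The gap in your plan is exactly the step you flag but do not fill. With the top two variables split off, $R$ contains $2(n-2)+1$ factors and its supremum is far above $4$: for instance with $\epsi_n=1$, $\epsi_{n-1}$ small and negative, and a lower variable of the same small magnitude but opposite sign, the pair of factors $(1-\epsi_i/\epsi_{n-1})^2(1-\epsi_i/\epsi_n)^2$ already approaches $4$ while $(1-\epsi_{n-1}/\epsi_n)^2\approx 1$, and this can happen without creating any near-degenerate \emph{same-sign} pair among $\epsi_1,\ldots,\epsi_{n-2}$ that would drive $P_{n-2}$ to zero. So the claim ``precisely in such configurations $P_{n-2}$ is small'' is not true as stated, and the compensation has to be quantified factor by factor; that quantification is precisely what Pohst's ratio substitution and block lemmas accomplish. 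Your identification of the extremal nested-opposite-pair configuration is correct (it matches the paper's maximizer $(-1,0,-1,0,\ldots)$ in the $\rho$ coordinates), and the closing remark that the restriction $n\leq 11$ reflects the combinatorial blow-up of the sign casework is accurate, but as written the inductive step does not close and would need to be replaced either by the ratio-variable block estimates or by an honest per-sign-pattern solution of the critical-point systems that you only sketch.
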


\begin{proof}
Consider the change of variables $\rho_i\coloneqq \epsi_i/\epsi_{i+1}$ for $i=1,\ldots,n-1$ and rewrite $P(\epsi_1,\ldots,\epsi_n)$ as $P(\rho_1,\ldots,\rho_{n-1})$. Define then $$Q(\rho_1,\ldots,\rho_{n-1})\coloneqq\sqrt{P(\rho_1,\ldots,\rho_{n-1})}$$
which is still a positive function. We look for an estimate of $Q$, which has a simpler form than $P$, over the hypercube $[-1,1]^{n-1}$.\\
Let us analyze some cases in low dimension:
\\\\
$n=2$: the function $Q$ is simply
$$Q(\rho_1) = (1-\rho_1)$$
which is clearly less or equal than 2, this value being attained at $\rho_1=-1$.\\\\
$n=3$: the function $Q$ has now the form
$$\begin{matrix}
    Q(\rho_1,\rho_2) = &(1-\rho_1)&(1-\rho_1\rho_2)\\
    & &(1-\rho_2)
\end{matrix}$$
where the right hand side is assumed to be a product of all the written factors. An easy optimization using the partial derivatives of $Q$ shows that the global maximum is attained on the boundary, precisely at the point $\rho_1=0, \rho_2=-1$ and that the maximum of $Q$ is again equal to 2.
\\\\
$n=4$: the function now assumes the form
$$\begin{matrix}
    Q(\rho_1,\rho_2,\rho_3) = &(1-\rho_1)&(1-\rho_1\rho_2)&(1-\rho_1\rho_2\rho_3)\\
&    &(1-\rho_2)&(1-\rho_2\rho_3)\\
&    &      & (1-\rho_3).
\end{matrix}$$
Considering all the 8 sign possibilities for $(\rho_1,\rho_2,\rho_3)$, one is able to show that $Q$ is bounded by 4 in every case: this fact is trivial when all the variables are positive, being $Q$ less than 1. For mixed signs, one gains information by either using the fact that $(1-\rho_i)(1-\rho_i\rho_j)(1-\rho_j)$ is less than 2 or by showing that the block of four factors $(1-\rho_1\rho_2)(1-\rho_1\rho_2\rho_3)(1-\rho_2)(1-\rho_2\rho_3)$ is less than 1, up to assuming some specific sign conditions on the $\rho_j$'s. The sharpest upper bound, equal to 4, is attained on the boundary, at the point given by $\rho_1=-1,\rho_2=0,\rho_3=-1$.

For values of $n$ up to 11, Pohst \cite{pohstRegulator} applied the same sketch of proof: for any sign condition on the $\rho_j$'s, one tries to estimate blocks of four or three factors by upper bounds which are 1 or 2 respectively, and the claim follows by checking every case. The result for any value of $n$ was obtained using a different approach (see \cite{battistonimolteniArxiv} for the details).
\end{proof}
%\begin{rmk}
%The inequality was claimed to be true for every $n\in\N$: though this is very likely, unfortunately the proof given by Bertin in \cite{bertin} seems not to work, because of incorrect assumptions on the existence of the maximum points on the boundary of the hypercube.
%\end{rmk}
\noindent
This is indeed a consistent improvement for the function $P$, and consequently for the Remak-Friedman inequality, whenever the considered numbers are real: this corresponds to a signature of the form $(n,0)$.
This better result was precisely the tool which allowed Astudillo, Diaz y Diaz and Friedman to classify the number fields with low regulator in the signatures $(8,0)$ and $(9,0)$.
\\ On the other side, one realizes that the classic estimate with upper bound $n^n$ is sharp for signatures which are totally complex or close to be totally complex. In order to understand this, we prove the following lemma, which is probably already known
in literature but for which we have not been able to find an
explicit reference.
\begin{lem}\label{Sharp}
Let $n\in\N$ be odd. Let $\zeta_n$ be a primitive $n$-th root of unity. Then 
$$P(1,\zeta_n,\zeta_n^2,\ldots,\zeta_n^{n-1})=n^n.$$
Let $n\in\N$ be even, and let $\zeta_n$ and $\zeta_{2n}$ be primitive roots of unity of order $n$ and $2n$ respectively. Then
$$P(1,\zeta_n,\zeta_n^2,\ldots,\zeta_n^{n-1})=n^n=P(\zeta_{2n},\zeta_{2n}^3,\ldots,\zeta_{2n}^{2n-1}).$$
\end{lem}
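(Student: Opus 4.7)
The plan is to reduce everything to the classical identity $\prod_{m=1}^{n-1}(1-\zeta_n^m)=n$ (obtained by evaluating $(x^n-1)/(x-1)=\prod_{m=1}^{n-1}(x-\zeta_n^m)$ at $x=1$) and to exploit the fact that all the roots of unity considered have modulus one, so the ordering hypothesis in the definition of $P$ is trivially satisfied and the product can be reorganized freely.

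First I would substitute $\epsi_k=\zeta_n^{k-1}$ into \eqref{FakeDiscriminant}. Since $|\epsi_i/\epsi_j|=1$ and $|1-z|=|1-\bar z|$, one has $|1-\epsi_i/\epsi_j|^2=|1-\zeta_n^{j-i}|^2$. Collecting the pairs according to the common difference $m=j-i\in\{1,\ldots,n-1\}$, whose multiplicity among the pairs with $i<j$ equals $n-m$, one obtains
\[
P(1,\zeta_n,\ldots,\zeta_n^{n-1})=\prod_{m=1}^{n-1}|1-\zeta_n^m|^{2(n-m)}.
\]
Using the palindromic symmetry $|1-\zeta_n^m|=|1-\zeta_n^{n-m}|$ (complex conjugation) and substituting $m\mapsto n-m$, the same quantity also equals $\prod_{m=1}^{n-1}|1-\zeta_n^m|^{2m}$. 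Multiplying the two expressions and invoking the cyclotomic identity yields
\[
P^2=\prod_{m=1}^{n-1}|1-\zeta_n^m|^{2n}=\Bigl(\prod_{m=1}^{n-1}|1-\zeta_n^m|\Bigr)^{2n}=n^{2n},
\]
so $P=n^n$. This argument is uniform in the parity of $n$ and therefore settles both the odd case and the first identity of the even case at once.

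For the second equality in the even case, I would simply observe that, setting $\epsi_k=\zeta_{2n}^{2k-1}$, one has $\epsi_i/\epsi_j=\zeta_{2n}^{2(i-j)}=\zeta_n^{i-j}$, so the multiset $\{\epsi_i/\epsi_j:i<j\}$ coincides with the one produced by the tuple $(1,\zeta_n,\ldots,\zeta_n^{n-1})$. Consequently $P$ evaluates to the same number $n^n$.

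There is no genuine analytic obstacle here: the only delicate bookkeeping is tracking the multiplicity $n-m$ correctly and pairing it with the conjugate-symmetric exponent $m$. The separation into odd and even in the statement is not forced by the proof but rather by the distinct signatures associated to these configurations, roughly $(1,(n-1)/2)$, $(2,(n-2)/2)$ and $(0,n/2)$, which together exhibit the sharpness of the universal bound $P\le n^n$ in the totally or nearly totally complex regime.
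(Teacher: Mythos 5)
Your argument is correct. The paper reaches the same product $\prod_{0\le i<j\le n-1}|\zeta_n^i-\zeta_n^j|^2$ from the same observation that multiplying by $|\zeta_n^j|=1$ costs nothing, but then closes the computation in one stroke by recognizing this as $|\mathrm{disc}(x^n-1)|$ and citing the known value $n^n$. You instead give a self-contained, elementary evaluation: sort the pairs by the gap $m=j-i$ with multiplicity $n-m$, exploit the palindromic symmetry $|1-\zeta_n^m|=|1-\zeta_n^{n-m}|$ to obtain a second expression with exponents $2m$, and multiply the two to force the exponents to the uniform value $2n$, after which the identity $\prod_{m=1}^{n-1}(1-\zeta_n^m)=n$ finishes the job. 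What the paper's route buys is brevity and a pointer to a standard fact; what yours buys is transparency and independence from any prior knowledge of discriminants of binomials, plus an explicit demonstration that the argument is parity-uniform. For the second equality in the even case both proofs coincide: one observes that $\zeta_{2n}^{2i+1}/\zeta_{2n}^{2j+1}=\zeta_n^{i-j}$, so the multiset of ratios entering $P$ is literally the same as for $(1,\zeta_n,\ldots,\zeta_n^{n-1})$.
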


\begin{proof}
Assume that $n\in\N$ is odd. The powers $\zeta_n^j$ with $j\in\{0,1,\ldots,n-1\}$ are complex numbers with absolute value equal to 1; moreover, the value of every factor of $P$ is unchanged by multiplication with $|\zeta_n^j|$ for some suitable $j$ depending on the factor. Thus
$$P(1,\zeta_n,\zeta_n^2,\ldots,\zeta_n^{n-1}) = \prod_{0\leq i<j\leq n-1}\left|1-\frac{\zeta_n^i}{\zeta_n^j}\right|^2 =
\prod_{0\leq i<j\leq n-1}\left|\zeta_n^i-\zeta_n^j\right|^2$$
and the last term is equal to $|\text{disc}(x^n-1)|$, which is known to be $n^n$. The procedure and the result are exactly the same if one assumes that $n$ is even.\\
Now assume $n$ to be even and consider the function $P(\zeta_{2n},\zeta_{2n}^3,\ldots,\zeta_{2n}^{2n-1})$: being $\zeta_{2n}^{2j}=\zeta_n^j$ for every $j\in\{0,\ldots,n-1\}$, we have
$$P(\zeta_{2n},\zeta_{2n}^3,\ldots,\zeta_{2n}^{2n-1}) = \prod_{0\leq i<j\leq n-1}\left|1-\frac{\zeta_{2n}^{2i+1}}{\zeta_{2n}^{2j+1}}\right|^2 = \prod_{0\leq i<j\leq n-1}\left|1-\frac{\zeta_n^i}{\zeta_n^j}\right|^2 $$
and this value is equal to $n^n$ by the previous lines.
\end{proof}
\noindent
Lemma \ref{Sharp} shows that the classic estimate of $P$ with $n^n$ is sharp in the signature $(1, (n-1)/2)$ when $n$ is odd and in the signatures $(0,n/2)$ and $(2,(n-2)/2)$ when $n$ is even.

So, on one side we have recalled a much better estimate whenever the signature of the fields is $(n,0)$; on the other, we have seen that the classic estimate is sharp for signatures which are very near to correspond to the totally complex case. One could  wonder if for mixed signatures the sharp upper bounds are intermediate values between Pohst's bound $ 4^{\lfloor{n/2}\rfloor}$ and $n^n$; as Table \ref{TableMaximum} of Section \ref{section5} suggests, the fewer real embeddings one takes into account, the more these upper bounds seem to increase up to the classic estimate.
%Such a behaviour, if confirmed, would provide not only improvements to Remak-Friedman inequality, but it would present a nice duality with the growth of the discriminant, which instead becomes bigger the more real embeddings one takes into account. 

\section{Analytic setting of the problem}

\subsection{Definition and examples with low signatures}\label{subsection4.1}
%The goal of this section is to study the behaviour of the functions which we are interested in using for obtaining improvements on Remak-Friedman's inequality.\\
As stated in the previous section, we want to study a possible improvement of the Remak-Friedman's inequality associated to the term defined in \eqref{FakeDiscriminant}. In the first part of this section we introduce an analytic setting in order to carry this study and we recover results for low signatures.

Let $n\in \N$ be an integer greater than $1$ and let $(r_1,r_2)$ be a pair of non-negative integers such that $n=r_1+2r_2$. Consider the set
\begin{align*}
 A_{n,r_2}\coloneqq \{&(\epsi_1,\ldots,\epsi_n)\in\C^n\colon 0 < |\epsi_1|\leq |\epsi_2|\leq \cdots\leq |\epsi_n|\leq 1,  r_1\text{ of the }\epsi_j\text{'s being real,} \\
 &\text{the remaining ones forming }r_2\text{ pairs of complex conjugated numbers}\}.  
\end{align*}
Define the function
\begin{equation*}
    Q(n,r_2,\cdot): A_{n,r_2}\rightarrow\R
\end{equation*}
\begin{equation}\label{FunctionQ}
    Q(n,r_2,(\epsi_1,\ldots,\epsi_n))\coloneqq \prod_{1\leq i<j\leq n}\left|1-\frac{\epsi_i}{\epsi_j}\right|.
\end{equation}
The square of $Q(n,r_2,(\epsi_1,\ldots,\epsi_n))$ is the quantity $P(\epsi_1,\ldots,\epsi_n)$ defined in (\ref{FakeDiscriminant}). We call the pair $(r_1,r_2)$ \textbf{the signature of the function} $Q(n,r_2,\cdot)$, in order to agree with the signature of number fields.\\
Given $Q(n,r_2,(\epsi_1,\ldots,\epsi_n))$ as above, assume that the pairs of complex conjugated numbers are $\{\epsi_{j_1},\epsi_{j_1+1}\},\{\epsi_{j_2},\epsi_{j_2+1}\},\ldots,\{\epsi_{j_{r_2}},\epsi_{j_{r_2+1}}\}$, with $j_i\in\{1,\ldots,n-1\}$. We define the following change of variables.
\begin{itemize}
    \item If $\epsi_i$ and $\epsi_{i+1}$ are both real, define $x_i\coloneqq \epsi_i/\epsi_{i+1}$.
    \item If $\epsi_{i}$ is real and $\epsi_{i+1}$ is complex, define $x_i\coloneqq\epsi_i/|\epsi_{i+1}|$.
    \item If $\epsi_i$ and $\epsi_{i+1}$ are complex conjugated, define $x_{i}\coloneqq\cos\theta$ with $\theta\coloneqq\arg \epsi_{i}$.
    \item If $\epsi_{i}$ is complex and $\epsi_{i+1}$ is real, define $x_i\coloneqq|\epsi_i|/\epsi_{i+1}$.
    \item If $\epsi_{i}$ and $\epsi_{i+1}$ are complex but not conjugated, define $x_i\coloneqq |\epsi_i|/|\epsi_{i+1}|$.
\end{itemize}
The function $Q$  can be replaced with several functions
\begin{equation*}\label{FunctionQChange}
    Q(n,r_2,\{j_1,\ldots,j_{r_2}\},\cdot): [-1,1]^{n-1}\rightarrow\R,
\end{equation*}
each one obtained from $Q$ by means of the above change of variables. We call the set of indexes $\{j_1,\ldots,j_{r_2}\}$ an admissible set of indexes: if $r_2=0$, we set $\{j_1,\ldots,j_{r_2}\}=\emptyset$. 

We define the number 
\begin{equation}\label{definitionM}
    M(n,r_2)\coloneqq  \max_{ \substack{\{j_1,\ldots,j_{r_2}\}\text{ admissible }\\(x_1,\ldots,x_{n-1})\in [-1,1]^{n-1}}} Q(n,r_2,\{j_1,\ldots,j_{r_2}\},(x_1,\ldots,x_n)).
\end{equation}

%the maximum of $Q(n,r_2,\cdot)$ over $[-1,1]^{n-1}$: thus $P(\epsi_1\ldots,\epsi_n)\leq M(n,r_2)^2$ for every choice of $(\epsi_1,\ldots,\epsi_n)\in A_{n,r_2}$.

\begin{coro}
Let $K=\Q(\epsi)\in\Fr$ with $\epsi\in\Ok^*\setminus\mu_K$, and let $\epsi_1,\ldots,\epsi_n$ be its conjugates ordered in increasing absolute value. Then one can replace the term $n\log n$ in \eqref{D1} with $2\log M(n,r_2)$.
\end{coro}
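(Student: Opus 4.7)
The plan is to revisit the proof of Remak-Friedman's inequality in the case $F=\Q$ and localise exactly the step where the factor $n\log n$ comes in. From \eqref{InequalityIntermediate} the right-hand side splits as $\log P(\epsi_1,\ldots,\epsi_n) + \sum_{j=2}^{n}2(j-1)\log|\epsi_j|$; the second summand is bounded by $m_K(\epsi)\cdot A(K/\Q)$ by the argument of \cite{friedmanAnalyticRegulator}, which I would inherit verbatim, while the first is bounded by $n\log n$ via the Remak-Bertin inequality $P\leq n^n$. Only this last bound needs to be sharpened: I would first use the scale invariance of $P$ to normalise so that $|\epsi_n|=1$, after which $(\epsi_1,\ldots,\epsi_n)$ lies in $A_{n,r_2}$ (the ordering is preserved, $r_1$ of the entries are real, and the remaining $r_2$ pairs are complex conjugates). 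Since each pair $\{z,\bar z\}$ satisfies $|z|=|\bar z|$, up to breaking ties in the ordering the two members of each pair occupy consecutive positions $\{\epsi_{j_i},\epsi_{j_i+1}\}$, so $\{j_1,\ldots,j_{r_2}\}$ is an admissible set of indices in the sense introduced just above the corollary.

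Applying the piecewise change of variables defined above the corollary then sends $(\epsi_1,\ldots,\epsi_n)$ to some $(x_1,\ldots,x_{n-1})\in[-1,1]^{n-1}$ at which $P(\epsi_1,\ldots,\epsi_n) = Q(n,r_2,\{j_1,\ldots,j_{r_2}\},(x_1,\ldots,x_{n-1}))^2 \leq M(n,r_2)^2$ by the very definition of $M(n,r_2)$. Taking logarithms and substituting into \eqref{InequalityIntermediate} replaces $n\log n$ by $2\log M(n,r_2)$, which is the claim. The only delicate point I anticipate is verifying that the change of variables is actually well-defined on the specific ordered tuple of conjugates, i.e.\ that the adjacency of each complex conjugate pair under the chosen ordering genuinely produces an admissible set of indices; this is automatic from $|z|=|\bar z|$ but deserves to be spelled out. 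Beyond this bookkeeping step there is no substantive obstacle, since the entire nontrivial content of the corollary is packaged into the definition of $M(n,r_2)$ and into the (conjectural) estimates for it discussed in the rest of the paper.
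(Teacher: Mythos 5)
Your proof is correct and takes essentially the same route the paper implicitly relies on: the paper states this corollary without a written proof, intending exactly the unwinding of \eqref{InequalityIntermediate} you describe, with the Remak--Bertin factor $n\log n = \log(n^n)$ sharpened to $2\log M(n,r_2)$ via the scale-normalisation, the change of variables defined just above, and the definition of $M(n,r_2)$. Your observation that the complex conjugate pairs can be placed in adjacent positions (since $|z|=|\bar z|$, breaking ties as needed) so that the index set is admissible is exactly the bookkeeping point the paper leaves tacit, and your wording of it is accurate.
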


\begin{coro}
By Theorem \ref{thmPohst}, one has $M(n,0)= 2^{\lfloor n/2 \rfloor}$ for every $n\geq 2$.
\end{coro}

\begin{coro}\label{CoroValuesM}
By Lemma \ref{Sharp} one has $M(n,(n-1)/2)= n^{n/2}$ for every odd integer $n\geq 3$, and $M(n,n/2) =  M(n,(n-2)/2)  = n^{n/2}$ for every even integer $n\geq 2$.
\end{coro}

\noindent
In particular, we know from Corollary \ref{CoroValuesM} that the maximum $M(n,r_2)$ of the function $Q$ cannot be improved for the signatures $(1,1), (2,1)$ and $(0,2)$. However, in the following lines we show how to recover the corresponding value of $M(n,r_2)$ with an approach different from the one used in Lemma \ref{Sharp}; instead, we will try to imitate Pohst's proof for the totally real signatures, using the proper change of variables and studying $Q(n,r_2,\cdot)$ analytically.
\begin{itemize}
    \item Consider first the signature $(1,1)$. Given $(\epsi_1,\epsi_2,\epsi_3)\in A_{3,1}$, let us assume that $\epsi_1$ is real and $|\epsi_2|=|\epsi_3|=1$ with $\epsi_2=\Bar{\epsi}_3$. Call  $x\coloneqq\epsi_1$ and $a\coloneqq\cos\theta$ with $\epsi_2=\exp(i\theta)$; then $(x,a)\in [-1,1]^2$ and the function $Q(3,1,\{2\},\cdot)$ is extended over $[-1,1]^2$ becoming
$$\begin{matrix}
& Q(3,1,\{2\},(x,a))\coloneqq& (1-2xa+x^2)\\
& & \hspace{0.2cm} 2\sqrt{1-a^2}
\end{matrix}$$
where the right hand side is assumed to be a product of all the written factors.\\
Then $Q(3,1,\{2\},(-x,-a))=Q(3,1,\{2\},(x,a))$ and the function is immediately seen to be maximized at the point $(1,-1/2)$ providing the value $3^{3/2}$. One notices that, thanks to the previous change of variables, this choice of $x$ and $a$ corresponds exactly to the third roots of unity which are known to give the correct value of $M(3,1)$ by Lemma \ref{Sharp}.\\
If one supposes instead that $\epsi_3$ is a real number and that $\epsi_1=\Bar{\epsi}_2$, then the boundary condition given by $A_{3,1}$ yields $\epsi_3\in\{\pm 1\}$ and we can take $\epsi_3=1$ without loss of generality (otherwise, one simply changes the sign to every $\epsi_j$). Being $\epsi_1= r\exp(i\theta)$ with $r\in[0,1]$ and defining $a\coloneqq\cos(\theta)\in [-1,1]$, the function $Q(3,1,\{1\},\cdot)$ can be extended again over $[-1,1]^2$ and becomes
$$\begin{matrix}
& Q(3,1,\{1\},(r,a))\coloneqq& (1-2ra+r^2)\cdot 2\sqrt{1-a^2}
\end{matrix}$$
which again is maximized at the point $(r,a)=(1,-1/2)$ corresponding to the third roots of unity and provides a maximum equal to $3^{3/2}$. Therefore, these two results put together give $M(3,1)=3^{3/2}$.
\vspace{0.1cm}
    \item Let us check now what happens for the signature $(2,1)$: for sake of simplicity, we only consider the case when $\epsi_1$ and $\epsi_2$ are real, while $\epsi_3$ and $\epsi_4$ are complex conjugated and of absolute value 1. Define then $x\coloneqq \epsi_1/\epsi_2$, $y\coloneqq\epsi_2$ and $a\coloneqq\cos(\theta)$ where $\epsi_3=\exp(i\theta)$; we have $(x,y,a)\in [-1,1]^3$ and again we can extend the function $Q(4,1,\cdot)$ over the hypercube obtaining the expression
$$\begin{matrix}
& Q(4,1,\{3\},(x,y,a))=& (1-x)&(1-2xya+(xy)^2)\\
& & & (1-2ya+y^2)\\
& & & 2\sqrt{1-a^2}.
\end{matrix}$$
We know that this function is estimated by $16$ from Theorem \ref{ThmBertin}: one verifies that this value is attained precisely at the point $(x,y,a)=(-1,1,0)$, which corresponds to the 4-th roots of unity via the change of variables. Studying the remaining cases given by the different choices of admissible indexes and the corresponding change of variables, one verifies that the maximum of 
$Q(4,1,\{j\},\cdot)$ with $j\in\{1,2\}$ is always equal to 16, so that $M(4,1)=16$.
\vspace{0.15cm}
\item For the signature $(0,2)$ we do not have to consider different subcases: in fact, we always have $\Bar{\epsi}_1=\epsi_2$ and $\Bar{\epsi}_3=\epsi_4$ and we can write $\epsi_1=r\exp(i\theta)$, $\epsi_3= s\exp(i\phi)$ with $0\leq r\leq s\leq 1$. Defining $x\coloneqq r/s, a\coloneqq \cos\theta$ and $b\coloneqq \cos\phi$, the function $Q(4,2,\{1,3\},\cdot)$ assumes the form
$$\begin{matrix}
& Q(4,2,\{1,3\},(x,a,b))=& &4\sqrt{1-a^2}\sqrt{1-b^2}\\
& & &((1+x^2)^2-4x(1+x^2)ab +4x^2(-1+a^2+b^2)).
\end{matrix}$$
We know that this function is bounded by $16$ from Theorem \ref{ThmBertin}, and this value is precisely attained at the point $(x,a,b)=(1,1/\sqrt{2},-1/\sqrt{2})$ which corresponds exactly to the numbers $\epsi_j = \zeta_8^{2j+1}$ via the change of variables. Thus, $M(4,2)=16$ as we expected.
\end{itemize}

\subsection{Studying the signature (3,1)}\label{sectionsignature31}
Signature $(3,1)$ is the first one for which we no longer have information due to Lemma \ref{Sharp}, so in this part of the section we try to apply the previous analytic setting in order to detect the correct value of $M(5,1)$.

Let us begin by assuming again that $\epsi_4$ and $\epsi_5$ are complex conjugated: with the change of variables $x\coloneqq\epsi_1/\epsi_2, y\coloneqq\epsi_2/\epsi_3, z\coloneqq\epsi_3, a\coloneqq \cos(\theta)$ where $\epsi_4=\exp(i\theta)$, the function $Q(5,1,\{4\},\cdot)$ can be extended over $[-1,1]^4$ and assumes the form
$$\begin{matrix}
& Q(5,1,\{4\},(x,y,z,a))=& (1-x)&(1-xy)&(1-2xyza+(xyz)^2)\\
& & &(1-y) &(1-2yza+(yz)^2)\\
& & &  &(1-2za+z^2)\\
& & &  & 2\sqrt{1-a^2}.
\end{matrix}$$
%Without any previous knowledge of $M(5,1)$, we study this function from an analytic point of view.
\begin{lem}\label{LemmaMaxQ}
The maximum of $Q(5,1,\{4\},(x,y,z,a))$ is attained at a point $(x,y,z,a)$ with $z=1$ and $a\neq \pm 1$.
\end{lem}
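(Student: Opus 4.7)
The plan is as follows. Write
$$Q(x,y,z,g) = C(x,y,g)\, F(z;x,y,g),$$
where $C(x,y,g) := 2\sqrt{1-g^2}(1-x)(1-y)(1-xy)\geq 0$ and $F(z;x,y,g) := \prod_{i=1}^{3} q_i(z)$ with $q_i(z) := (\beta_i z - g)^2 + (1-g^2)$ and $(\beta_1,\beta_2,\beta_3) = (1, y, xy)$; each $q_i$ is strictly positive once $|g|<1$. The condition $g \neq \pm 1$ at a maximizer is immediate: the factor $2\sqrt{1-g^2}$ vanishes on the faces $\{g = \pm 1\}$, but $Q$ attains strictly positive values in $[-1,1]^4$ (e.g.\ $Q(0,-1,0,0) = 4$), so the global maximum is positive and hence not realized on $\{g = \pm 1\}$.

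For the assertion $z = 1$, I would first exploit the involutive symmetry $Q(x,y,z,g) = Q(x,y,-z,-g)$; this identity comes from the map $\epsi_i \mapsto -\epsi_i$ on the original variables, which preserves every ratio $\epsi_i/\epsi_j$ and hence each factor of the product defining $P$. It allows us to restrict the search for a maximizer to the slab $\{z \geq 0\}$. Let $(x^*,y^*,z^*,g^*)$ be such a maximizer; positivity of $Q$ at the max forces $C(x^*,y^*,g^*)>0$ and $F(z^*;x^*,y^*,g^*)>0$. If $z^* \in [0,1)$ then the first-order condition $\partial_z Q = 0$ reduces to $F'(z^*;x^*,y^*,g^*) = 0$. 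I would then combine this critical equation with the remaining first-order conditions $\partial_x Q = \partial_y Q = \partial_g Q = 0$ to conclude that $F(1;x^*,y^*,g^*) \geq F(z^*;x^*,y^*,g^*)$, hence $(x^*,y^*,1,g^*)$ is again a maximizer, so some maximizer indeed has $z = 1$.

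The main obstacle is this last step. The polynomial $F(\cdot\,;x,y,g)$ has degree $6$ in $z$ and in general does not attain its $[-1,1]$-maximum at a boundary point: for instance at $(x,y,g) = (0,-1,9/10)$ one checks that the $[-1,1]$-maximum of $F$ is at the interior point $z = 0$. Consequently the argument cannot be purely one-variable, and the joint optimality with respect to $(x,y,g)$ must be used to rule out such interior-critical configurations. The concrete plan is to compute the logarithmic derivative $F'/F = 2\sum_i \beta_i(\beta_i z - g)/q_i(z)$ explicitly, alongside $\partial_x(\log Q)$, $\partial_y(\log Q)$, $\partial_g(\log Q)$, and then to use the critical system to eliminate $x^*,y^*,g^*$, reducing the desired inequality $F(1;x^*,y^*,g^*) \geq F(z^*;x^*,y^*,g^*)$ to an explicit polynomial inequality which can be checked on the critical locus.
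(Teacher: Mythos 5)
Your decomposition $Q = C(x,y,g)\,F(z;x,y,g)$ with $q_i(z)=(\beta_i z - g)^2 + (1-g^2)$ and $(\beta_1,\beta_2,\beta_3)=(xy,y,1)$ (up to ordering) matches the structure the paper works with, and the $g\neq\pm 1$ half of the claim is handled the same way. You also correctly observe that a one-variable argument in $z$ alone must fail and give a convincing counterexample at $(x,y,g)=(0,-1,9/10)$. But the core step --- ruling out $z^*\in(-1,1)$ --- is left as a plan ("eliminate $x^*,y^*,g^*$ and reduce to an explicit polynomial inequality") that you do not carry out, and it is not clear that this elimination scheme would close. So there is a genuine gap.

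The key idea you are missing is that you do not need to prove $F(1;\cdot)\geq F(z^*;\cdot)$, nor do you need to invoke all four first-order conditions. The paper takes a single linear combination of just the two conditions $\partial_z\log Q=0$ and $\partial_g\log Q=0$, namely $z\,\partial_z\log Q - g\,\partial_g\log Q$, and observes that with $\alpha_j\in\{xy,y,1\}$ this collapses to
\[
\frac{g^2}{1-g^2} \;+\; \sum_{j=1}^{3}\frac{2\alpha_j^2 z^2}{1-2\alpha_j z g+\alpha_j^2 z^2}\;=\;0,
\]
in which (for $|g|<1$) every denominator equals $(\alpha_j z-g)^2+(1-g^2)>0$ and every numerator is $\geq 0$. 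Hence any interior critical point in the $(z,g)$-slice forces $g=0$ and $\alpha_j z=0$ for all $j$; since $\alpha_3=1$, this gives $(z,g)=(0,0)$. But $Q(5,1,\{4\},(x,y,0,0))=2\,Q(3,0,\emptyset,(x,y))\leq 2\cdot 4=8$ by Pohst's bound, while $Q(5,1,\{4\},(0,-1,1,0))=16$, so the maximum cannot occur at such a point. This pushes $(z,g)$ to the boundary; $g=\pm 1$ annihilates $Q$, and the symmetry $Q(x,y,z,g)=Q(x,y,-z,-g)$, which you already identified, then lets one assume $z=1$. In short: the trick is a sign-definite combination of two critical equations plus one concrete value comparison, not an elimination computation, and you should replace your final step with this argument.
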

\begin{proof}
Observe that the function $Q$ is not negative over $[-1,1]^4$, is strictly positive in the interior $(-1,1)^4$ and is greater than 1 on part of the interior, so the maximums of $Q$ and of $\log Q$ are attained at the same point. Let us assume that the point of maximum is $(x,y,z,a)$ such that $(z,a)\in(-1,1)^2$. Then we have
$$
\begin{cases}
\partial_z\log Q = 0\\
\partial_a\log Q = 0
\end{cases}
$$
and this system of partial derivatives has the form
$$
\begin{cases}
\sum_{j=1}^3 \frac{-2\alpha_j a + 2\alpha_j^2 z}{1-2\alpha_j za + \alpha_j^2 z^2} = 0 & (\text{I}) \\
-\frac{a}{1-a^2} + \sum_{j=1}^3 \frac{-2\alpha_j z}{1-2\alpha_j z a + \alpha_j^2 z^2} = 0 &(\text{II})
\end{cases}
$$
where $\alpha_1\coloneqq xy$, $\alpha_2\coloneqq y$, $\alpha_3\coloneqq1$.\\
Now we manipulate the lines of the system in order to get
\begin{align*}
0 = z\cdot(\text{I}) - a\cdot(\text{II})
  &=
\sum_{j=1}^3 \frac{-2\alpha_j a z + 2\alpha_j^2 z^2}{1-2\alpha_j z a + \alpha_j^2 z^2}
+\frac{a^2}{1-a^2} + \sum_{j=1}^3 \frac{2\alpha_j a z}{1-2\alpha_j z a + \alpha_j^2 z^2}
\\
&=
\frac{a^2}{1-a^2} + \sum_{j=1}^3 \frac{2\alpha_j^2 z^2}{1-2\alpha_j z a  + \alpha_j^2 z^2}.
\end{align*}
Every term in the above sum is non-negative, and so we must have $a=0$ and $\alpha_j z =0$ for every $j$; being $\alpha_3=1$, it must be $z=0$. Thus the maximum is attained at a point $(x,y,z,a)$ which satisfies the condition $(z,a)=(0,0)$: but $Q(5,1,\{4\},(x,y,0,0))= 2 Q(3,0,\emptyset,(x,y))$, which by Theorem \ref{thmPohst} is bounded by $2\cdot 4=8$. This estimate is clearly in contradiction with the behaviour of $Q(5,1,\{4\},\cdot)$ because $Q(5,1,\{4\},(0,-1,1,0))= Q(4,1,\{3\},(-1,1,0))=16$; thus the maximum point must have the parameters
$(z,a)$ on the boundary of $[-1,1]^2$. \\
Now, the values $a=\pm 1$ force $Q$ to be 0, and so we are left with $z= \pm 1$ and
$a\neq \pm 1$. Being
\[
G(x,y,z,a) = G(x,y,-z,-a)
\]
we can finally assume $z=1$ and $a\neq \pm 1$.
\end{proof}
\noindent
Thanks to this lemma, the research of the maximum of $Q(5,1,\{4\},\cdot)$ is equivalent to studying the maximum of the function
$$\begin{matrix}
& Q(5,1,\{4\},(x,y,1,a))=& (1-x)&(1-xy)&(1-2xya+(xy)^2)\\
& & &(1-y) &(1-2ya+(y)^2)\\
& & & & 4(1-a)\sqrt{1-a^2}.
\end{matrix}$$

\begin{conj}\label{conj1}
    The maximum of $Q(5,1,\{4\},(x,y,1,a))$ is $16.6965\ldots$ and is attained at the point $(x,y,z,a) = (1/\sqrt{7},-1,1,1/(2\sqrt{7}))$.
\end{conj}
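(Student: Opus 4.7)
The plan is to continue the analysis started in Lemma~\ref{LemmaMaxQ}: after reducing to $z=1$ and $g\neq\pm 1$, the task is to maximize the three-variable function
\[
\tilde Q(x,y,g) := Q(5,1,\{4\},(x,y,1,g)) = 4(1-x)(1-xy)(1-2xyg+x^2y^2)(1-y)(1-2yg+y^2)(1-g)\sqrt{1-g^2}
\]
on $[-1,1]^3$. Continuity and non-negativity guarantee a maximum; vanishing of the factors $(1-x)$, $(1-y)$ and $(1-g)\sqrt{1-g^2}$ shows that $\tilde Q=0$ on $\{x=1\}\cup\{y=1\}\cup\{g=\pm 1\}$, so the maximum must lie either in the open cube $(-1,1)^3$ or on one of the two remaining faces $\{x=-1\}$ or $\{y=-1\}$.

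I would first handle the face $y=-1$, where the function collapses to
\[
\tilde Q(x,-1,g) = 16(1-x^2)(1+2xg+x^2)(1-g^2)^{3/2}.
\]
The critical-point system $\partial_x \log\tilde Q=\partial_g\log\tilde Q=0$ reduces, after clearing denominators in the first equation, to $g(1-3x^2)=2x^3$, i.e.\ $g=2x^3/(1-3x^2)$. Substituting into the second equation and simplifying yields the clean factorization $(7x^2-1)(x^2-1)^2=0$; since $x=\pm 1$ makes $\tilde Q$ vanish, the only critical points with positive value are $x=\pm 1/\sqrt{7}$ with matching $g=\pm 1/(2\sqrt{7})$, and both produce the common value
\[
16\cdot\tfrac{6}{7}\cdot\tfrac{9}{7}\cdot\lp\tfrac{27}{28}\rp^{3/2} = \tfrac{8748\sqrt{21}}{2401} = 16.6965\ldots
\]
A Hessian check confirms this is a strict face maximum, and the edges of the face contribute $0$.

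Next I would treat the remaining face $x=-1$, where the function becomes
\[
\tilde Q(-1,y,g) = 8(1-y^2)\bigl((1+y^2)^2-4y^2g^2\bigr)(1-g)\sqrt{1-g^2},
\]
and carry out the analogous two-variable critical-point analysis to show all its critical values are strictly below $16.6965\ldots$. Finally, to exclude interior critical points of $(-1,1)^3$ possibly exceeding the target, I would imitate the trick of Lemma~\ref{LemmaMaxQ}: seek a non-negative linear combination of $\partial_x\log\tilde Q$, $\partial_y\log\tilde Q$, $\partial_g\log\tilde Q$ forcing any such critical point to collapse onto one of the boundary cases already handled or onto a lower-dimensional problem ($Q(4,1,\cdot)$ or $Q(3,0,\cdot)$) whose maximum is known and smaller.

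The main obstacle is this last step. Unlike the two-variable situation in Lemma~\ref{LemmaMaxQ}, where the combination $z\partial_z-g\partial_g$ miraculously produced a sum of non-negative summands, the three-variable system for $\tilde Q$ does not obviously admit such an identity, and direct elimination produces high-degree polynomials that resist hand analysis. The most rigorous feasible route is then to compute a Gr\"obner basis or appropriate resultant of the three partial derivatives via computer algebra, enumerate every interior critical point, and verify numerically that none exceeds the face value. This is almost certainly why the statement is offered as a conjecture: the boundary extremum admits an explicit closed-form answer, but a fully analytic exclusion of all interior competitors seems to require either an inspired algebraic identity or a certified symbolic computation.
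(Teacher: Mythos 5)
Your face-$\{y=-1\}$ analysis is correct and coincides with the paper's: the relation $g=2x^3/(1-3x^2)$, the factorization of the resulting numerator into $x(x^2-1)^2(7x^2-1)$, the extremal points $x=\pm 1/\sqrt{7}$, $g=\pm 1/(2\sqrt{7})$, and the closed form $8748\sqrt{21}/2401\approx 16.6965$ (which the paper leaves as a decimal) all line up. Where you diverge is in how the remainder of the cube $[-1,1]^3$ in $(x,y,g)$ is excluded. The paper does \emph{not} attempt an analytic treatment of the face $\{x=-1\}$ or of the open interior: it instead discretizes $g\in[-0.999,0.999]$ with step $1/1536$ and, for each such $g$, solves the two-variable polynomial critical-point system in $(x,y)$ (via \textbf{polrootsreal} and \textbf{polresultant} in PARI) on each sub-region, observing empirically that the sampled suprema always sit on $y=-1$. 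You instead propose a fully symbolic route: a two-variable critical-point analysis on $\{x=-1\}$ plus a Gr\"obner-basis or resultant-based enumeration of interior critical points. If carried through with certified arithmetic, your route would promote the statement to a theorem, something the paper's discrete $g$-sweep cannot do even in principle (as the author acknowledges in the Remark following the proof). But you have not carried it out, and you correctly identify the obstruction: no analogue of the $z\,\partial_z - g\,\partial_g$ identity of Lemma~\ref{LemmaMaxQ} presents itself for the triple $(x,y,g)$, and brute-force elimination yields high-degree polynomials. One small omission in your face computation: the factor $x$ in $x(x^2-1)^2(7x^2-1)$ gives a further critical point $x=0$, $g=0$ with value $16$, which you should note is strictly below $16.6965\ldots$ before discarding it. This unresolved interior step is precisely why both the paper and your proposal stop at an ``(Almost) proof''; your diagnosis of where and why the argument is incomplete matches the author's own.
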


\begin{aprf}
Let us first define the function
$$\begin{matrix}
& R(x,y,1,a)=& (1-x)&(1-xy)&(1-2xya+(xy)^2)\\
& &  & (1-y) &(1-2ya+(y)^2)
\end{matrix}$$
which satisfies the relation $$Q(5,1,\{4\},(x,y,1,a)) = R(x,y,1,a)\cdot 4(1-a)\sqrt{1-a^2}.$$
This choice is done in order to study the partial derivatives with respect to $x$ and $y$ of $Q$ without carrying the factor which depends only on $a$.
Let $R_x(x,y,1,a)\coloneqq \partial_x R (x,y,1,a)$ and $R_y(x,y,1,a)\coloneqq \partial_y R (x,y,1,a)$.\\ 
The research of the maximum point of $Q(5,1,\{4\},\cdot)$ is carried by starting a numerical search on PARI/GP for the values that the function $Q$ assumes over specific sub-regions of $[-1,1]^3$, each search depending on a value of $a$ in a finite set.\\
In fact, let us vary the value of $a$ between $-0.999$ and $0.999$, with steps of size $1/1536$: for each one of these choices, we study the following quantities.
\begin{itemize}
    \item The maximum of $Q(5,1,\{4\},(x,-1,1,a))$ over $x\in [-1,1]$: in this case we have assumed the only meaningful boundary condition on $y$ (because the function is equal to zero if $y=1$) and we look for the maximum value by selecting numerically, via the command \textbf{polrootsreal()}, the real roots of the partial derivative $R_x(x,-1,1,a)$ such that $|x|\leq 1$, and we compute $Q(5,1,\{4\},\cdot)$ for these values of $x$.
    \item The maximum of $Q(5,1,\{4\},(-1,y,1,a))$ over $y\in [-1,1]$: this case is similar to previous one and this time we look for the real roots of $R_y(-1,y,1,a)$, evaluating then $Q(5,1,\{4\},\cdot)$ over the roots $y$ such that $|y|\leq 1$.
    \item The maximum of $Q(5,1,\{4\},(x,y,1,a))$ over the open set $\{(x,y)\in (-1,1)^2\}$: in this case we compute the common real roots of the polynomials $R_x(x,y,1,a)$ and $R_y(x,y,1,a)$ and the we evaluate the function $Q(5,1,\{4\},\cdot)$ over the roots $(x,y)$ with $|x|< 1$ and $|y|<1$. The numerical computation of the roots is done by studying the roots of the resultant of $R_x$ and $R_y$ with respect to the variable $x$: the needed PARI/GP command is \textbf{polresultant()}.
\end{itemize}
For every choice of $a$ in our interval, one looks for the regions where bigger values of $Q$ are obtained and notices that these values are attained at the boundary  $y=-1$; we can thus continue our study assuming this boundary condition.
%In fact, even if we have studied just a finite set of values for $g$, the fact that $Q(5,1,\{4\},\cdot)$ is the square root of a polynomial implies that studying its behaviour on an equi-distributed and very refined set of values for $g$ should provide the previous maximum values with fair enough precision.
\\
With this assumption the function $Q(5,1,\{4\},(x,-1,1,a))$ becomes now
$$Q(5,1,\{4\},(x,-1,1,a)) = 16(1-x^2)(1+2xa+x^2)(1-a^2)^{3/2}$$
and we look for its maximum, being finally able to give a precise analytic study. Surely $x$ must not be equal to $\pm 1$, otherwise the function is zero, and thus we must look for $x\in (-1,1)$: in order to study the partial derivative with respect to $x$, let us consider the factors which depend on $x$ by defining
$$S(x,a)\coloneqq (1-x^2)(1+2xa+x^2).$$
We have $S_x(x,a): = \frac{\partial S}{\partial_x} (x,a) = -4x^3 - 6 a x^2 + 2a $ and we study $S_x(x,a)=0$: this equation gives the condition
\begin{equation}\label{RelationG&X}
a = \frac{2x^3}{1-3x^2}    
\end{equation}
and one verifies that the function $a(x)$ defined by the rule above has positive derivative 
$$ a'(x)\coloneqq 6x^2(1-x^2)/(1-3x^2)^2$$ 
over $x\in [-1,1]^2$, which in turn implies that (\ref{RelationG&X}) gives a bijective correspondence: hence, for every value of $a\in (-1,1)$, there is a unique $x\coloneqq x(a)\in (-1,1)$ such that $S(x(a),a)=0$.

We finally study $16 S(x(a),a)(1-a^2)^{3/2}$: derive this in $a$ in order to get
$$\left[\frac{\partial S}{\partial x_1}(x(a),a)\frac{\partial x(a)}{\partial a} + \frac{\partial S}{\partial x_2}(x(a),a)\right](1-a^2)^{3/2} + S(x(a),a)(-3a)(1-a^2)^{1/2}=0.$$
By definition of $x(a)$ we have $\frac{\partial S}{\partial x_1}(x(a),a)=0$ and so
\begin{align*}
    \text{ }&\frac{\partial S}{\partial x_2}(x(a),a)(1-a^2) + S(x(a),a)(-3a)\\
    &= 2x(1-x^2)(1-a^2) - 3a (1-x^2)(1+2xa+x^2) = 0.
\end{align*}
Since the maximum not attained at $x=\pm 1$, we reduce ourselves to study
$$2x(1-a^2) - 3g (1+2xa+x^2) = 0.$$
Using (\ref{RelationG&X}) in this equation we finally get
$$\frac{-14x^7+30x^5-18x^3+2x}{9x^4-6x^2+1}=0 .$$
The numerator factorizes as $x(x^2-1)^2(7x^2-1)$, and the only zeros which do not annihilate the function $Q$ are attained at $x= \pm 1/\sqrt{7}$, implying $a= \pm 1/(2\sqrt{7})$ from (\ref{RelationG&X}): evaluating $Q(5,1,\{4\}, (1/\sqrt{7}, -1, 1, 1/(2\sqrt{7})))$ gives the maximum value $16.6965\ldots$
\end{aprf}

\begin{rmk}
The procedure shown above cannot be considered a proof because of the discrete process given by considering a finite, even if large, number of points instead of continuous intervals, and this is why we refer to the lines above as part of a conjecture.
\end{rmk}

\begin{rmk}
In the PARI/GP program used for the computation, some errors resulted while evaluating the resultant over $y=0$: however, this is not a real problem, since 
$$Q(5,1,\{4\},(x,0,1,a)) = (1-x)4(1-a)\sqrt{1-a^2}\leq 2\cdot 3\sqrt{3} = 6\sqrt{3} < 12$$ 
and meanwhile we know that $Q(5,1,\{4\},(0,-1,1,0))=Q(4,1,\{3\},(-1,1,0))=16$, so that surely an absolute maximum point for our original function cannot be attained over $y=0$.
\end{rmk}

\begin{rmk}
Even if the conjecture was actually proved, we could not conclude from this that $M(5,1) = 16.6965\ldots$ because we would have proved an estimate just for $Q(5,1,\{4\},\cdot)$, while nothing has been obtained yet for $Q(5,1,\{j\},\cdot)$ with $j\in\{1,2,3\}$: these seem to be functions for which it is not possible to reduce the number of variables from 4 to 3 just like in Lemma \ref{LemmaMaxQ}.
\end{rmk}

\section{Conjectures and applications}\label{section5}
\subsection{Experiments and conjectures on the maximums}
As described in the previous section, we are currently not able to prove that $M(5,1)=16.6965\ldots$ because we cannot make a good study of the different cases which cover all the possibilities for the functions $Q(5,1,\{j\},\cdot)$. Whenever we increase the degree and we change the signature, the situation becomes more and more complicated: the functions $Q(n,r_2,\{j_1,\ldots,j_{r_2}\},\cdot)$ are polynomials of several variables for which we are not able to prove anything in a rigorous way.\\
We decided then to content ourselves with a conjectural estimate of the maximum values $M(n,r_2)$ for $n\leq 8$ and see how these values would modify the study of minimum regulators given by Remak-Friedman's inequality. This was pursued by means of the following heuristics. 
\begin{itemize}
    \item First of all, for every signature $(r_1,r_2)$ and for every admissible ordering $\{j_1,\ldots,j_{r_2}\}$, we wrote the function $Q(n,r_2,\{j_1,\ldots,j_{r_2}\},(x_1,\ldots,x_{n-1}))$ in PARI/GP and we studied its evaluations over $10^5$ random $(n-1)$-ples $(x_1,\ldots,x_{n-1})\in [-1,1]^{n-1}\cap\Q^{n-1}$. After every evaluation, we confronted the obtained value with the biggest value obtained up to that iteration; once all the admissible orderings have been covered, we confronted all the maximum values we found and we saved the biggest value. We also kept track of the points in $[-1,1]^{n-1}\cap\Q^{n-1}$ at which the maximum values were attained.
    \vspace{0.1cm}
    \item Later, we wrote the functions $\Big( Q(n,r_2,\{j_1,\ldots,j_{r_2}\},\cdot)\Big)^2$ in MATLAB and we gave a numerical optimization for these functions on $[-1,1]^{n-1}$ via the MATLAB Optimization Toolbox. This was done using the \textbf{GlobalSearch} and \textbf{MultiStart} commands, which numerically detect global minimums, and we applied them on the function $-Q^2$: the command was iterated several times, each iteration depending on a randomly chosen starting point $(x_1,\ldots,x_{n-1})\in[-1,1]^{n-1}$ given as input for the optimization. As before, each iteration gave a candidate maximum and the points at which this maximum is attained: once all the iterations were done, we saved only the biggest maximum found for every signature and for every admissible ordering.
\end{itemize}
For every degree $n\leq 8$ and for every corresponding signature $(r_1,r_2)$, the two heuristic procedures provided the same values for $M(n,r_2)$ and the same points at which the values are attained. Moreover, once the degree and the signature were fixed, the same maximum value has been found for every admissible ordering $\{j_1,\ldots,j_{r_2}\}$. 

In Table \ref{TableMaximum} we present the conjectured upper bounds $M(n,r_2)$, and from that we form the following conjectures.
\begin{conj}
The values in Table \ref{TableMaximum} are the correct values of the upper bounds $M(n,r_2)$ for $n\leq 8$ and $r_2\leq 4$.    
\end{conj}
\begin{conj}\label{conjIncreasing}
For every $r_2\in\N$ there exists $C(r_2)\in\N$ such that $M(n+2,r_2)=2M(n,r_2)$ for every $n\geq C(n,r_2)$.
\end{conj}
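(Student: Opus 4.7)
The plan is to prove the conjecture by establishing the two inequalities $M(n+2,r_2)\geq 2M(n,r_2)$ and $M(n+2,r_2)\leq 2M(n,r_2)$ for $n$ above a threshold $C(r_2)$, interpreting the statement as $n\geq C(r_2)$.

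For the lower bound, I would use an explicit extension construction. Fix an admissible ordering $\{j_1,\ldots,j_{r_2}\}$ and a near-maximizer $(x_1,\ldots,x_{n-1})$ of $Q(n,r_2,\{j_1,\ldots,j_{r_2}\},\cdot)$. The shifted set $\{j_1+2,\ldots,j_{r_2}+2\}$ is still admissible in dimension $n+2$, and I would evaluate $Q(n+2,r_2,\{j_1+2,\ldots,j_{r_2}+2\},\cdot)$ at the extended point $(-1,0,x_1,\ldots,x_{n-1})\in[-1,1]^{n+1}$. A direct inspection of \eqref{FakeDiscriminant} shows that the factor $|1-\epsi_1/\epsi_2|$ contributes $2$, all mixed factors involving the first two indices collapse to $1$ (in the boundary limit $x_2=0$, which makes $\epsi_1=\epsi_2=0$), and the remaining factors reproduce $Q(n,r_2,\{j_1,\ldots,j_{r_2}\},(x_1,\ldots,x_{n-1}))$ verbatim, yielding the desired factor of $2$. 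Taking the supremum over the original maximizer gives the bound.

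For the upper bound, I would adapt Pohst's block-factor case analysis. The aim is to show that on $[-1,1]^{n+1}$ the maximum of any $Q(n+2,r_2,\{j_1',\ldots,j_{r_2}'\},\cdot)$ localizes to a boundary configuration in which some pair of consecutive real-ratio coordinates $(x_i,x_{i+1})$ takes a value in $\{(-1,0),(0,-1)\}$; on such loci the function factors as $2$ times a $Q$-function of signature $(r_1,r_2)$ in dimension $n$, after renaming variables. The localization should proceed via partial-derivative manipulations in the spirit of Lemma \ref{LemmaMaxQ}: combining pairs of stationarity conditions into a non-negative sum (as with the $z\cdot(I)-g\cdot(II)$ trick there) forces selected coordinates to boundary values. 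Pohst's sign analysis on blocks of three or four factors then finishes the residual real-only subproblem.

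The main obstacle will be the presence of the complex-conjugate factors $2\sqrt{1-g_k^2}$ and the mixed factors $1-2\alpha g_k+\alpha^2$ linking real-ratio variables with the $g_k$'s. These are not polynomials in the $g_k$'s and break the clean case-by-case sign analysis that worked for Pohst in the totally real case; the partial-derivative manipulation also becomes considerably more involved than in Lemma \ref{LemmaMaxQ}, which already required delicate work for the single complex pair in signature $(3,1)$. The threshold $C(r_2)$ would emerge naturally as the smallest $n$ for which the alternating boundary pattern on the real-ratio coordinates fits among the real-ratio positions without being forced to overlap the indices $j_k'$; a natural milestone, supported by Table \ref{TableMaximum}, is to prove first the case $r_2=1$ with $C(1)=4$, and then induct on $r_2$, tracking how each new complex pair reduces the room available for the Pohst-type boundary configuration.
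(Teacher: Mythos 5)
The paper does not prove this statement: it appears as a conjecture extracted from the numerical experiments summarized in Table~\ref{TableMaximum} (random sampling in PARI plus MATLAB \textbf{GlobalSearch}/\textbf{MultiStart} optimization), and no analytic argument is offered for it. There is therefore no ``paper's proof'' to compare against.

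Your lower-bound construction is correct, rigorous once phrased in hypercube coordinates, and better than anything recorded in the paper. If $\{j_1,\ldots,j_{r_2}\}$ is admissible for $Q(n,r_2,\cdot)$ and $(x_1,\ldots,x_{n-1})$ is a maximizer on $[-1,1]^{n-1}$ (attained by continuity on a compact set), then $\{j_1+2,\ldots,j_{r_2}+2\}$ is admissible in dimension $n+2$, and at the point $(-1,0,x_1,\ldots,x_{n-1})$ every cross factor involving $\epsi_1$ or $\epsi_2$ carries $x_2$ as a multiplicative factor inside $\epsi_i/\epsi_j$ and hence equals $1$; the factor $|1-\epsi_1/\epsi_2|=|1-x_1|$ contributes $2$; and the block over $\epsi_3,\ldots,\epsi_{n+2}$ reproduces $Q(n,r_2,\{j_1,\ldots,j_{r_2}\},(x_1,\ldots,x_{n-1}))$. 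This yields $M(n+2,r_2)\geq 2M(n,r_2)$ for \emph{every} $n$ and $r_2$, not merely above a threshold, and it is consistent with each column of Table~\ref{TableMaximum}.

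The upper bound $M(n+2,r_2)\leq 2M(n,r_2)$ for $n\geq C(r_2)$ is the genuinely open content of the conjecture, and your sketch does not close it. The asserted ``localization to a boundary configuration with a $(-1,0)$ or $(0,-1)$ block'' cannot hold universally: the whole reason for the threshold $C(r_2)$ is that when $n$ is close to $2r_2$ the maximizer is the root-of-unity configuration of Lemma~\ref{Sharp}, one has $M(n,r_2)=n^{n/2}$, and $M(n+2,r_2)/M(n,r_2)$ is well above $2$ (for instance $M(8,2)/M(6,2)=7^{7/2}/6^{3}\approx 4.2$). Any Pohst-style boundary argument would have to pinpoint exactly when this transition occurs and then control the mixed factors $1-2\alpha z g+\alpha^2 z^2$ and the square-root factors $\sqrt{1-g^2}$, a step the paper itself already finds intractable for a single complex pair (witness the ``(Almost) proof'' following Lemma~\ref{LemmaMaxQ}, which the author explicitly declines to call a proof). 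You have correctly diagnosed the obstruction, but your argument leaves the conjecture as open as it was; only the half $M(n+2,r_2)\geq 2M(n,r_2)$ is actually established.
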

\begin{conj}\label{conjr2=1}
$C(1)=4$ and for every $n\geq C(1)$, after a suitable change of variables, the maximum value $M(n,1)$ is attained at $(x_1,\ldots x_{n-3},x_{n-2},a) = (-1,0,-1,0,\ldots,-1,0)$ for $n$ even and $(x_1,\ldots x_{n-3},x_{n-2},a) = (-1/\sqrt{7}, -1,-1/\sqrt{7},-1,\ldots,-1/\sqrt{7},-1,1,1/(2\sqrt{7})). $
\end{conj}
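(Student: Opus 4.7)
The plan is to combine the reduction technique of Lemma~\ref{LemmaMaxQ} with an inductive doubling argument, imitating Pohst's strategy for the totally real case. First I would establish that, for every admissible ordering $\{j\}$ of the complex pair, the maximum of $Q(n,1,\{j\},\cdot)$ equals the one obtained by placing the complex pair at the top of the chain of moduli, i.e.\ taking $j=n-1$: any block of real ratios \emph{above} the complex pair contributes at most by the totally real bound of Pohst, and a direct comparison shows this is dominated by the configuration in which the pair sits at the top. Making this reduction precise would probably require a case-by-case inspection for low $n$ together with a monotonicity argument for general $n$.

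Next, I would extend Lemma~\ref{LemmaMaxQ} to arbitrary $n$: setting $z:=x_{n-2}$, the identity $z\,\partial_z\log Q - g\,\partial_g\log Q$ again collapses to a sum of nonnegative terms (one term per real embedding, of the form $2\alpha_j^2 z^2/(1-2\alpha_jzg+\alpha_j^2 z^2)$), so every interior critical point must satisfy $z=g=0$; at $z=0$, however, $Q(n,1,\{n-1\},\cdot)$ degenerates into twice a totally real $Q$ of degree $n-2$, whose maximum by Pohst's theorem is strictly smaller than the candidate value $2M(n-2,1)$. The maximum must therefore lie on $z=\pm 1$, and the symmetry $Q(\ldots,-z,-g)=Q(\ldots,z,g)$ lets me assume $z=1$.

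With $z=1$ fixed, the core of the argument is the inductive doubling $M(n+2,1)=2M(n,1)$. Setting $x_2=0$ at the bottom of the chain telescopes every factor of $Q$ involving a product $x_1x_2\cdots$ to $1$, so the optimisation decouples into the one-variable problem $\max_{x_1\in[-1,1]}(1-x_1)=2$ attained at $x_1=-1$, together with the residual $(n-1)$-variable problem, which is exactly $Q(n,1,\{n-1\},\cdot)$. Iterating from the base case $M(4,1)=16$ (attained at $(-1,1,0)$) would produce the even-$n$ pattern of alternating $-1,0$ entries in the conjecture, while iterating from $M(5,1)=16.6965\ldots$ (attained at the $\sqrt{7}$-point) would produce the odd-$n$ pattern ending with $(1,1/(2\sqrt{7}))$.

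The main obstacle is twofold. First, the base case $M(5,1)$ is itself only conjectural — the (almost) proof relies on sampling $g$ at a discrete set of values rather than on a rigorous continuity argument, and this must be upgraded (for instance, by interval arithmetic on the resultants of $R_x$ and $R_y$ seen as polynomials in $g$) before any induction can rest on it. Second, the decoupling claim that the global optimum really lies at $x_2=0$ is delicate precisely because $Q$ is not literally separable there: the telescoping products make $\partial_{x_2}Q$ couple to every higher index, so the global maximality of the candidate point must be checked against competing critical points, plausibly via a resultant-based exclusion combined with explicit bounds on the boundary contributions. Without such a rigorous enclosure, the inductive step can only be stated at the level of a plausible heuristic, matching the status of the base case.
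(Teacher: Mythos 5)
This statement is a \emph{conjecture} in the paper: the author supplies no proof, only the numerical heuristics (PARI random sampling and the MATLAB Optimization Toolbox) described in Section~5.1. There is therefore no ``paper proof'' to compare against; your plan should be judged on its own merits.

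Your outline is the natural attack and matches what one would hope to do: (i) reduce, among admissible orderings, to the one with the complex pair at the top of the modulus chain ($j=n-1$); (ii) extend Lemma~\ref{LemmaMaxQ}, which does go through verbatim because the combination $z\,\partial_z\log Q - g\,\partial_g\log Q$ again equals $\frac{g^2}{1-g^2}+\sum_j\frac{2\alpha_j^2 z^2}{1-2\alpha_jzg+\alpha_j^2z^2}$ with $\alpha_j$ the telescoping real products and the smallest $\alpha_j=1$, so any interior critical point lands on $(z,g)=(0,0)$ where $Q$ collapses to $2\,Q(n-2,0,\emptyset,\cdot)\le 2\cdot 2^{\lfloor(n-2)/2\rfloor}$, strictly below the candidate $M(n,1)$; and (iii) obtain a constructive lower bound $M(n+2,1)\ge 2M(n,1)$ by inserting a bottom pair $(x_1,x_2)=(-1,0)$, which indeed telescopes every mixed factor involving index~$2$ to~$1$. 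You are also honest about the two genuine gaps: the $\le$ half of the doubling, and the fact that the $M(5,1)$ base case is itself only established by discrete sampling.

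One concrete issue you should flag, though, is that your construction does \emph{not} reproduce the optimizing points written in the conjecture. Inserting $(-1,0)$ at the bottom gives, for even $n$, $(-1,0,-1,0,\ldots,-1,\pm 1,0)$ (note the penultimate $x$-entry is $\pm 1$, not $0$: if $x_{n-2}=0$ the entire complex block degenerates and the value drops to $2\cdot 2^{\lfloor(n-2)/2\rfloor}$, far below $M(n,1)$), and for odd $n$ it gives $(-1,0,\ldots,-1,0,\pm 1/\sqrt{7},-1,1,1/( 2\sqrt{7}))$, which \emph{interleaves zeros} and disagrees with the point $(-1/\sqrt{7},-1,-1/\sqrt{7},-1,\ldots,-1/\sqrt{7},-1,1,1/(2\sqrt{7}))$ asserted by the paper (which, read literally, is also sign-inconsistent with the $+1/\sqrt{7}$ appearing in Conjecture~1). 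So either the maximum is attained at several inequivalent points and the paper's description is one of them, or the literal pattern in the conjecture is slightly off. Since you are proposing to derive the conjecture by induction from the lower construction, make the set of candidate maximizers explicit and state clearly that you only certify those; otherwise the discrepancy will look like an error in the induction step rather than in the target.

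Finally, two warnings about the plan. First, the reduction to $j=n-1$ is not merely a convenience: the cases $j<n-1$ couple real ratios on both sides of the complex pair, and there $Q$ genuinely depends on $n-1$ variables with no analogue of the $z\partial_z - g\partial_g$ collapse; the paper explicitly remarks after the $(5,1)$ analysis that the $\{j\}$ with $j<n-1$ ``seem to be functions for which it is not possible to reduce the number of variables''. Without a separate argument bounding those cases by $Q(n,1,\{n-1\},\cdot)$, the whole $z=\pm1$ reduction is only valid for one choice of ordering. Second, even granting the base case, the $\le$ direction of $M(n+2,1)=2M(n,1)$ is where essentially all the work lies; the present sketch offers no mechanism (no relaxation, no SOS certificate, no sign case analysis in the style of Pohst) that would control the competing boundary pieces where $x_2\ne 0$, and the resultant-exclusion idea needs to be at least roughly specified before the induction can be called a plan rather than a wish.
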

\begin{table}[H]
    \centering
    \begin{tabular}{l|c|c|c|c|c|c|c}
        \backslashbox{$r_2$}{$n$} & 2 &3 & 4 & 5 & 6 & 7 & 8\\
         \hline
        0 & 2 & 2 & 4 & 4 & 8 & 8 & 16 \\
        \hline
        1 & & $3^{3/2}$ & 16 & $16.6965\ldots$ & 32 & $2\cdot 16.6965\ldots$ & 64 \\
        \hline
        2 & & & 16 & $5^{5/2}$ & $6^{6/2}$ & $245.8193\ldots$ & $7^{7/2}$\\
        \hline
        3 & & & & &$6^{6/2}$ & $7^{7/2}$ & $8^{8/2}$   \\
        \hline
        4 & & & & & & & $8^{8/2}$
    \end{tabular}
    \caption{ The conjectured values for $M(n,r_2)$.}
    \label{TableMaximum}
\end{table}
Conjecture \ref{conjIncreasing} is suggested by the first two lines of Table \ref{TableMaximum} and in particular from Theorem \ref{thmPohst} for $r_2=0$: in fact, this yields $C(0)=2$ and for every $n\geq C(0)$, after a suitable change of variables, the maximum value $M(n,0)$ is attained at the point $(x_1,\cdots,x_n)=(0,-1,0,-1,\ldots,0,-1)$ for $n$ even and $(-1,0,-1,\ldots,0,-1)$ for n odd.

Conjecture \ref{conjr2=1} is suggested by the second line of Table \ref{TableMaximum} and from the discussion presented in Section \ref{sectionsignature31}: in fact, this is a generalization of Conjecture \ref{conj1}. The points at which the maximums are conjectured to be attained have been revealed by the numerical optimization process described at the beginning of this section.
\subsection{Application to the minimum regulator problem}\label{SectionFinale}
In this last section, we prove Theorems \ref{ThmDeg8}, \ref{ThmDeg5} and \ref{ThmDeg7}.

\begin{proof}[Proof of Theorem \ref{ThmDeg8}]
Put $R_0=7.48$: then, replacing the term $8\log 8$ in \eqref{D1} with $2\log M(8,6)= 12\log 2$, the upper bound $D_1$ lowers from $43.7698$ to $35.6632$, and  $4g_{6,1}(\exp(-35.6632)) = 102.264\ldots > 7.48$. Being $4g_{6,1}(1/79259702) = 7.48749\ldots$, we conclude that a number field with signature $(6,1)$ and $R_K\leq R_0$ must have $|d_K|\leq 79259702$ and so, if it exists, is contained in a completely known list. Computation of actual values of the regulators like in Remark \ref{RemarkGRH} show that these fields are exactly the one described in the statement of the Theorem.
\end{proof}

\begin{rmk}
The signature $(6,1)$ is the only signature in degree 8 for which one can obtain results from the conjectural estimates. Surely signature $(2,3)$ is not affected because we know that $M(8,3)=8^{8/2}$ by Lemma \ref{Sharp}, so that in this case we are still stuck with the previous estimate given by Remak-Friedman's inequality.\\
For what concerns signature $(4,2)$, we would have an improvement given by the (conjectural) correct value $2\log M(8,2) = 2\log\lp7^{7/2}\rp = 7\log 7$ instead of the upper bound $8\log 8$. Unfortunately, using again $R_0=2.298$ as in Section \ref{sectionfirstattempt}, the new estimate would imply that number fields with signature $(4,2)$ and $R_K\leq R_0$ must have $|d_K|\leq \exp(35.3463)$, and $4g_{4,2}(\exp(-35.3463))$\\ 
$= -166.2009\ldots$; not even the adaptations described in \cite{regulators} work in this case.\\
At the moment, the only thing we can conclude for signature $(4,2)$ is that a field $K\in \mathcal{F}_{4,2}$ with $R_K\leq 2.298$ must be either the field generated by the polynomial $x^8 - x^6 - 6x^5 + 3x^3 + x^2 + 2x - 1$, having discriminant $15243125$ and regulator 2.2977\ldots or some possible field $K$ with $|d_K|\in (20829049, \exp(35.3463)).$
\end{rmk}

\begin{proof}[Proof of Theorem \ref{ThmDeg5}]
Put $R_0=1.73$. If we replace the factor $5\log 5$ in \eqref{D1} with $2\log M(5,1) = 2\log \lp16.6965\rp\ldots$, the new upper bound is $|d_K|\leq \exp(16.8961)$ and one has \\
$2g_{3,1}(\exp(-16.8961)) = 3.404\ldots$ \\
Since $2g(1/(\delta_{9,3})^5) = 2.158\ldots$ from \cite[Theorem 8]{regulators}, we  look for smaller upper bounds using the factor 4 in the computations. We have in fact $4g_{3,1}(1/48000) = 2.157\ldots$, and thus we get that any field of signature $(3,1)$ with $R_K\leq 2.15$ must have $|d_K|\leq 48000$. Studying the list containing these fields (obtained from the Kl\"{u}ners-Malle Database and LMFDB database), we get the desired result, since every such field satisfies condition \eqref{CorrettoRegolatoreCaso2} and we know the true values of their regulators.
\end{proof}

%We could get some improvement for the signature $(3,1)$ by means of the conjectural value $M(5,1)=16.6965\ldots$, and this application would give a better classification than the one used in \cite{regulators}.\\
%In fact, suppose we want to detect all the number fields with this signature which have $R_K\leq 1.73$. %The usual estimate by Remak-Friemdman's inequality would give the geometric bound $|d_K|\leq \exp(18.5126)$: being this upper bigger than $\delta_{5,1}\coloneqq 391125.11$, which is the lower bound for which fields of signature $(3,1)$ admit non-principal different ideal, we must use the factor 2 instead of the factor 4 in our computations, and one has $2g(\exp(-18.)) = 1.5608\ldots$.\\

\begin{proof}[Proof of Theorem \ref{ThmDeg7}]
Put $R_0=8$: the geometric bound given by Remak-Friedman inequality would give the value $\exp(37.0334) > \exp(20.1)=: (\delta_{15,3})^7$, which would not be useful because $2 g_{5,1}(\exp(-37.0334))= -527.6403\ldots$. By replacing the factor $7\log 7$ with $2\log M(7,1)=2\log (2\cdot 16.6965\ldots)$, we obtain instead the upper bound $|d_K|\leq \exp(30.4288)$ which is way better because $2 g_{5,1}(\exp(-30.4288)) = 10.2565\ldots > R_0$.\\
Now, $2g_{5,1}(1/(\delta_{15,3})^7) = 13.705\ldots$ and so we can use the factor 4: one verifies that $4g_{5,1}(1/(2\cdot 10^7)) = 8.1578\ldots$, and so we must look at the list of 528 number fields of signature $(5,1)$ with $|d_K|\leq 2\cdot 10^7$ (the lists are collected from the same sources as before). Just as for the previous remark, the values of the regulators of these fields given by PARI/GP are correct, because  \eqref{CorrettoRegolatoreCaso2} is satisfied.
\end{proof}

\end{document}